\definecolor{wineRed}{rgb}{0.7,0,0.3}
\definecolor{grandBleu}{rgb}{0,0,0.8}
\definecolor{darkGreen}{rgb}{0,0.4,0}
\definecolor{blueViolet}{rgb}{0.4,0,1.0}
\definecolor{bloodOrange}{rgb}{0.85,0.05,0}
\definecolor{mycolor}{rgb}{0.8,0,0.2}
\DeclareMathAlphabet{\mathpzc}{OT1}{pzc}{m}{it}
\numberwithin{equation}{section}
\theoremstyle{plain}
\newtheorem{mainThm}{Main Theorem} 
\newtheorem{lemma}{Lemma}[section]
\newtheorem{cor}{Corollary}
\theoremstyle{definition}
\newtheorem{definition}{Definition}
\newtheorem{rem}{Remark}
\newtheorem{ex}{Example}
\def\F{\mathcal{F}}
\def\N{\mathbb{N}}
\def\R{\mathbb{R}}
\def\L{\mathcal{L}}
\def\sH{\mathscr{H}}
\def\sF{\mathscr{F}}
\def\sV{\mathscr{V}}
\def\ds{\displaystyle}
\def\Lap{\mathit{\Delta}}
\def\Sgn{\mathop{\mathrm{Sgr}}\nolimits}
\DeclareMathOperator{\diver}{div}
\begin{document}
\vspace*{-0cm}
\title{\Large Well-posedness of parabolic KWC-systems with variable-dependent mobilities
\vspace{-2ex}
}

\author{Daiki Mizuno
}
\affiliation{Division of Mathematics and Informatics, \\ Graduate School of Science and Engineering, Chiba University, \\ 1-33, Yayoi-cho, Inage-ku, 263-8522, Chiba, Japan}
\email{d-mizuno@chiba-u.jp}
\vspace{-2ex}

\sauthor{Ken Shirakawa
}
\saffiliation{Department of Mathematics, Faculty of Education, Chiba University \\ 1--33 Yayoi-cho, Inage-ku, 263--8522, Chiba, Japan}
\semail{sirakawa@faculty.chiba-u.jp}
\vspace{-2ex}

\footcomment{
AMS Subject Classification: 
35G61, 
35K51, 
35K67, 
35K70, 
74N20. 
\\
Keywords: grain boundary motion, parabolic KWC system, unknown-dependent mobility, regularity, well-posedness
}
\maketitle

\noindent
{\bf Abstract.}
In this paper, we deal with the parabolic KWC system, associated with the mathematical model of grain boundary motion.
The goal of this paper is to guarantee the well-posedness of the parabolic KWC system. However, such results have not been reported under the setting where the mobility of grain boundary motion depends on the unknown.
To overcome this difficulty, results for the pseudo-parabolic type KWC system in [Antil et al., SIAM J. Math. Anal. \textbf{56}(5), 6422--6445](2024) suggest that the $H^1$-regularity of the time-derivative of the solution plays an essential role in verifying the uniqueness of the solution.
In this light, we consider the pseudo-parabolic KWC system as an approximating system of the parabolic one, and focus on the improvements of regularity of solution to the parabolic system.
By virtue of this regularity result, we establish the well-posedness theory on the parabolic KWC system.

\newpage 

\section{Introduction}

Let $N \in \{ 1,2,3,4 \}$ be a fixed dimension, and $\Omega \subset \R^N$ be a bounded domain with a boundary $\Gamma := \partial\Omega$.
When $N > 1$, we suppose that $\Gamma$ has $C^4$-regularity, and denote by $n_\Gamma$ the unit outer normal to $\Gamma$. $T > 0$ denotes a fixed time constant. Also, we set $Q_T := (0,T) \times \Omega$, $\Sigma_T := (0,T) \times \Gamma$.

In this paper, we consider a class of parabolic systems, denoted by (S)$_\varepsilon$ for $\varepsilon \in [0,1]$.
\begin{align}
  {\rm (S)}_\varepsilon:&
  \\[-4ex]
  &\left\{ \begin{aligned}
    &\partial_t \eta(t) - \Lap \eta(t) + g(\eta(t)) + \alpha'(\eta(t)) \sqrt{\varepsilon^2 + |\nabla \theta(t)|^2} = u(t), \mbox{ in } Q_T,
    \\
    &\nabla \eta(t) \cdot n_\Gamma = 0, \mbox{ on } \Sigma_T,
    \\
    &\eta(0,x) = \eta_0(x), \mbox{ for } x \in \Omega,
  \end{aligned} \right.
  \\[1.5ex]
  &\left\{ \begin{aligned}
    &\alpha_0(\eta(t)) \partial_t \theta(t) - \diver \left( \alpha(\eta(t)) \frac{\nabla \theta(t)}{\sqrt{\varepsilon^2 + |\nabla \theta(t)|^2}} + \kappa \nabla \theta(t) \right) = v(t), \mbox{ in } Q_T,
    \\
    &\left( \alpha(\eta(t)) \frac{\nabla \theta(t)}{\sqrt{\varepsilon^2 + |\nabla \theta(t)|^2}} + \kappa \nabla \theta(t) \right) \cdot n_\Gamma = 0, \mbox{ on } \Sigma_T,
    \\
    &\theta(0,x) = \theta_0(x), \mbox{ for } x \in \Omega.
  \end{aligned} \right.
\end{align}
Each system (S)$_\varepsilon$ is based on a mathematical model of grain boundary motion, proposed by \cite{MR1752970,MR1794359}. The unknowns of the system $\eta = \eta(t,x)$ and $\theta = \theta(t,x)$ are the order parameters to represent the orientation order and the orientation angle in a polycrystal body, respectively.
Their dynamics are governed by a free-energy for grain boundary motion, called \emph{KWC-energy}, defined by the following formula:
\begin{align}
  &\sF_\kappa^\varepsilon: [\eta, \theta] \in [L^2(\Omega)]^2 \mapsto \sF_\kappa^\varepsilon(\eta, \theta) 
  \\
  &\quad := \left\{ \begin{aligned}
    &\frac{1}{2} \int_\Omega |\nabla \eta|^2 \,dx + \int_\Omega G(\eta) \,dx + \Phi_\kappa^\varepsilon(\alpha(\eta);\theta), \mbox{ if } [\eta,\theta] \in [H^1(\Omega)]^2,
    \\
    &+\infty, \mbox{ otherwise}.
  \end{aligned} \right.
\end{align}
Here, for some nonnegative function $\beta \in H^1(\Omega)$, $\Phi_\kappa^\varepsilon(\beta;\cdot):L^2(\Omega) \longrightarrow [0,\infty]$ is a proper, lower semi-continuous and convex function to govern the reproduction of the grain boundaries, called \emph{interfacial energy}, and defined as:
\begin{align}
  &\Phi_\kappa^\varepsilon(\beta;\cdot): \ \theta \in L^2(\Omega) \mapsto \Phi_\kappa^\varepsilon(\beta;\theta)
  \\
  &\quad := \left\{ \begin{aligned}
    &\int_\Omega \beta \sqrt{\varepsilon^2 + |\nabla \theta|^2} \,dx + \frac{\kappa}{2} \int_\Omega |\nabla \theta|^2, \mbox{ if } \theta \in H^1(\Omega),
    \\
    &+\infty, \mbox{ otherwise}.
  \end{aligned} \right.
\end{align}
Then, the system (S)$_\varepsilon$ is derived as the following gradient flow.
\begin{equation}
  -\begin{bmatrix}
    \partial_t \eta(t)
    \\
    \alpha_0(\eta(t)) \partial_t \theta(t)
  \end{bmatrix} = \nabla_{[\eta, \theta]} \sF_\kappa^\varepsilon(\eta(t), \theta(t)) + \begin{bmatrix}
    u(t)
    \\
    v(t)
  \end{bmatrix} \mbox{ in } [L^2(\Omega)]^2, \ \mbox{ for a.e. } t \in (0,T).
\end{equation}
We note that when $\varepsilon = 0$, the free-energy $\sF_\kappa^0$ is non-smooth, and it results in the singular diffusivity of the second equation of (S)$_0$, which is mathematically justified by means of the subdifferential operator (cf. \cite{MR1814993,MR1799898,MR2033382,MR1712447}). 
In this context, $\kappa > 0$ is a fixed constant. $g = g(\eta)$ is a Lipschitz perturbation for the orientation order $\eta$, and $G = G(\eta)$ is a nonnegative primitive of $g$, i.e., $\frac{d}{d\eta}G = g$. $\alpha = \alpha(\eta)$ and $\alpha_0 = \alpha_0(\eta)$ are given nonnegative functions, called \emph{mobilities} of grain boundary motion. $u = u(t,x)$, $v = v(t,x)$ are given forcing terms. In particular, $u$ represents the relative temperature on $\Omega$. Finally, $\eta_0 = \eta_0(x)$, $\theta_0 = \theta_0(x)$ are the initial data for $[\eta, \theta]$.

The aim of this paper is to verify the well-posedness for the parabolic KWC system (S)$_\varepsilon$ under the setting that the mobility of grain boundary motion $\alpha_0$ depends on the order parameter $\eta$, which is a longstanding issue. 
Many researchers have been working to establish the mathematical validity of the KWC-type system. So far, the results regarding the existence of solution and its asymptotic behavior have been reported under various settings, such as homogeneous Neumann boundary condition (cf. \cite{MR2469586,MR2548486,MR2836555,MR3203495,MR3268865,MR3670006}), non-homogeneous Dirichlet boundary condition (cf. \cite{MR4352617}), dynamic boundary condition (cf. \cite{MR4677072}), time-periodic setting (cf. \cite{MR4683700}), and so on. Moreover, recent studies \cite{MR4603338,MR4809351} considered a 3D model of grain boundary motion, and provided the results of the existence of solution and its large-time behavior.

However, the guarantee of the uniqueness of the solution has been a challenging question, due to the velocity term $\alpha_0(\eta) \partial_t \theta$, and the singular diffusion $-\diver \bigl( \alpha(\eta) \frac{\nabla \theta}{|\nabla \theta|} \bigr)$, both of which involve the $\eta$-dependent mobilities. In fact, previous works which achieved the uniqueness \cite{MR2469586,MR2548486,MR2836555,MR3155454,MR4603338} modified $\alpha_0$ to be independent of $\eta$ (essentially, $\alpha_0$ is a constant).

Recently, \cite{MR4797677,MR4922530} reported a uniqueness result for a modified KWC system, derived as the following energy-dissipation flow with damping terms:
\begin{gather}
  -\begin{bmatrix}
    \partial_t \eta(t) - \mu^2 \Lap_N \partial_t \eta(t)
    \\
    \alpha_0(\eta(t)) \partial_t \theta(t) - \nu^2 \Lap_N \partial_t \theta(t)
  \end{bmatrix} = \nabla_{[\eta, \theta]} \sF_\kappa^\varepsilon(\eta(t), \theta(t)) + \begin{bmatrix}
    u(t)
    \\
    v(t)
  \end{bmatrix} \mbox{ in } [L^2(\Omega)]^2,
  \\
  \mbox{ for a.e. } t \in (0,T), \mbox{ and for arbitrary pair } [\mu,\nu] \in (0,1)^2, \label{E_PP_KWC}
\end{gather}
where $\Lap_N$ denotes the Laplacian operator subject to the zero-Neumann boundary condition. Such regularization is often referred to as the pseudo-parabolic regularization. The solution to \eqref{E_PP_KWC} is expected to have better regularity owing to the damping terms $-\Lap_N \partial_t \eta$ and $-\Lap_N \partial_t \theta$, such as:
\begin{equation}
  [\eta, \theta] \in [W^{1,2}(0,T;H^1(\Omega))]^2, \label{reg_PP_KWC}
\end{equation}
and this smoothing effect plays a key role to guarantee the uniqueness of the solution. Indeed, \cite{MR4797677,MR4922530} provided the proof of the uniqueness by using the strong regularity as in \eqref{reg_PP_KWC} together with the continuous embedding from $H^1(\Omega)$ to $L^4(\Omega)$, valid under $N \leq 4$. Therefore, this results suggest that the uniqueness assertion can be obtained if we achieve the strong regularity \eqref{reg_PP_KWC} for the solution to the parabolic KWC-type system (S)$_\varepsilon$.

On this basis, we consider the pseudo-parabolic KWC system as an approximating system for the parabolic one, and attempt to improve the regularity of solution through limiting process $\mu \downarrow 0$, $\nu \downarrow 0$. Along with this, we address the questions of the uniqueness and the continuous dependence of solution to the parabolic KWC system.

Based on the above discussion, the purpose of this paper is to derive strong regularity kindred to that obtained in pseudo-parabolic type PDEs for the solution to the parabolic KWC system, and to discuss the well-posedness of the parabolic KWC system with $\eta$-dependent mobility $\alpha_0$. More precisely, we treat the following two Main Theorems. 

\vspace{2ex}
\begin{description}
    \item[Main Theorem 1.] Improvement of regularity, and uniqueness of solution to (S)$_{\varepsilon_0}$ under smoothness conditions for initial data and forcing terms.
\vspace{1ex}
    \item[Main Theorem 2.] Continuous dependence of solution to (S)$_{\varepsilon_0}$ with respect to the initial data and forcings. 
\end{description}
\vspace{2ex}

Although certain additional assumptions are required, these Main Theorems provide a positive answer to the longstanding problem of the uniqueness of the solution for the parabolic KWC systems with the $\eta$-dependent mobility $\alpha_0$. Besides, the well-posedness results including the continuous dependence of solution with respect to the initial data and the forcings have potential applications to advanced mathematical topics, such as optimal control problems. Also, Main Theorem \ref{mainThm1} reinforces a fundamental assertion for regularity theory for nonlinear parabolic equations involving singularities; the smoothness of initial condition and forcings implies stronger regularity of the solution.

\medskip
\noindent
{\bf Outline:}
Preliminaries are given in the next Section 2, and on this basis, the Main Theorems are stated in Section 3. For the proofs of Main Theorems, we prepare Section 4 to set up an approximation method for (S)$_{\varepsilon_0}$ by means of pseudo-parabolicity. Based on these, the Main Theorems are proved in Section 5, by using the auxiliary results obtained in Section 4. 

\section{Preliminaries}
\label{sec:Prelim}
We begin by prescribing the notations used throughout this paper. 
\bigskip

\noindent
\underline{\textbf{\textit{Notations in real analysis.}}}
We define:
\begin{align*}
    & r \vee s := \max \{ r, s \} ~ \mbox{ and } ~ r \wedge s := \min \{r, s\}, \mbox{ for all $ r, s \in [-\infty, \infty] $,}
\end{align*}
and especially, we write:
\begin{align*}
    & [r]^+ := r \vee 0 ~ \mbox{ and } ~ [r]^- := -(r \wedge 0), \mbox{ for all $ r \in [-\infty, \infty] $.}
\end{align*}

Let $ d \in \N $ be a fixed dimension. We denote by $ |y| $ and $ y \cdot z $ the Euclidean norm of $ y \in \mathbb{R}^d $ and the scalar product of $ y, z \in \R^d $, respectively, i.e., 
\begin{equation*}
\begin{array}{c}
| y | := \sqrt{y_1^2 +\cdots +y_d^2} \mbox{ \ and \ } y \cdot z  := y_1 z_1 + \cdots + y_d z_d, 
\\[1ex]
\mbox{ for all $ y = [y_1, \ldots, y_d], ~ z = [z_1, \ldots, z_d] \in \mathbb{R}^d $.}
\end{array}
\end{equation*}
Besides, we let:
\begin{align*}
    & \mathbb{B}^d := \left\{ \begin{array}{l|l}
        y \in \R^d & |y| < 1
    \end{array} \right\} ~ \mbox{ and } ~ \mathbb{S}^{d -1} := \left\{ \begin{array}{l|l}
        y \in \R^d & |y| = 1
    \end{array} \right\}.
\end{align*}
We denote by $\mathcal{L}^{d}$ the $ d $-dimensional Lebesgue measure, and we denote by $ \mathcal{H}^{d} $ the $ d $-dimensional Hausdorff measure.  In particular, the measure theoretical phrases, such as ``a.e.'', ``$dt$'', and ``$dx$'', and so on, are all with respect to the Lebesgue measure in each corresponding dimension. Also on a Lipschitz-surface $ S $, the phrase ``a.e.'' is with respect to the Hausdorff measure in each corresponding Hausdorff dimension. In particular, if $S$ is $C^1$-surface, then we simply denote by $dS$ the area-element of the integration on $S$.

For a Borel set $ E \subset \R^d $, we denote by $ \chi_E : \R^d \longrightarrow \{0, 1\} $ the characteristic function of $ E $. Additionally, for a distribution $ \zeta $ on an open set in $ \R^d $ and any $i \in \{ 1,\dots,d \}$, let $ \partial_i \zeta$ be the distributional differential with respect to $i$-th variable of $\zeta$. As well as we consider, the differential operators, such as $\nabla,\ \diver, \ \nabla^2$, and so on, in distributional sense.
\bigskip

\noindent
\underline{\textbf{\textit{Abstract notations. (cf. \cite[Chapter II]{MR0348562})}}}
For an abstract Banach space $ X $, we denote by $ |\cdot|_{X} $ the norm of $ X $, and denote by $ \langle \cdot, \cdot \rangle_X $ the duality pairing between $ X $ and its dual $ X^* $. In particular, when $ X $ is a Hilbert space, we denote by $ (\cdot,\cdot)_{X} $ the inner product of $ X $. 

For two Banach spaces $ X $ and $ Y $,  let $  \mathscr{L}(X; Y)$ be the Banach space of bounded linear operators from $ X $ into $ Y $. 

For Banach spaces $ X_1, \dots, X_d $ with $ 1 < d \in \N $, let $ X_1 \times \dots \times X_d $ be the product Banach space endowed with the norm $ |\cdot|_{X_1 \times \cdots \times X_d} := |\cdot|_{X_1} + \cdots +|\cdot|_{X_d} $. However, when all $ X_1, \dots, X_d $ are Hilbert spaces, $ X_1 \times \dots \times X_d $ denotes the product Hilbert space endowed with the inner product $ (\cdot, \cdot)_{X_1 \times \cdots \times X_d} := (\cdot, \cdot)_{X_1} + \cdots +(\cdot, \cdot)_{X_d} $ and the norm $ |\cdot|_{X_1 \times \cdots \times X_d} := \bigl( |\cdot|_{X_1}^2 + \cdots +|\cdot|_{X_d}^2 \bigr)^{\frac{1}{2}} $. In particular, when all $ X_1, \dots,  X_d $ coincide with a Banach space $ Y $, the product space $X_1 \times \dots \times X_d$ is simply denoted by $[Y]^d$.
\bigskip

\noindent
\underline{\textbf{\textit{Basic notations.}}}
Let $0 < T < \infty$ be a fixed constant of time, and let $N \in \{ 1,2,3 \}$ is a fixed dimension. Let $\Omega \subset \R^N$ be a bounded domain with a boundary $\Gamma := \partial\Omega$, and when $N > 1$, $\Gamma$ has $C^\infty$-regularity with the unit outer normal $n_\Gamma$. Also, we define $d_\Gamma:\R \longrightarrow [0,\infty)$ as the distance function from $\Gamma$, i.e., for any $x \in \R^N$, $d_\Gamma(x)$ is the distance from $x$ to $\Gamma$. As is well-known, $n_\Gamma = \nabla d_\Gamma|_\Gamma$ on $\Gamma$, and hence, $\partial_i n_\Gamma = \nabla \partial_i d_\Gamma|_\Gamma$ on $\Gamma$ for all $i = 1, \ldots, N$. Moreover, due to the smoothness and the compactness of $\Gamma$, there exists a positive constant $C_\Gamma$ such that:
\begin{equation}
    |\partial_i n_\Gamma| = |\nabla \partial_i d_\Gamma| \leq C_\Gamma \mbox{ on } \Gamma, \mbox{ for all $ i = 1, \ldots, N $.} \label{eq:CGamma}
\end{equation}

Additionally, as notations of base spaces, we let:
\begin{equation*}
    H := L^2(\Omega), \ V := H^1(\Omega), ~ 
    \sH_T := L^2(0,T;H),  \mbox{ and } \sV_T := L^2(0,T;V).
\end{equation*}

\bigskip
\noindent
\underline{\textbf{\textit{Notations for the differential operators.}}}
Let $\Lap_N$ be the Laplace operator subject to the Neumann boundary condition. $\Lap_N$ can be regarded as a bounded linear operator from $V$ to $V^*$, with the following variational formulation:
\begin{equation}
    \langle \Lap_N z, w \rangle_V := -(\nabla z, \nabla w)_{[H]^N}, \ \mbox{ for all } z,w \in V. \label{eq:LapN}
\end{equation}
Besides, the distributional divergence $\diver$ can be also regarded as a bounded linear operator from $[H]^N$ to $V^*$, with the following variational identity:
\begin{equation}
    \langle \diver {\bm z}, w \rangle_V := -({\bm z}, \nabla w)_{[H]^N}, \ \mbox{ for any } {\bm z} \in [H]^N, \, \mbox{ and any } w \in V. \label{eq:div}
\end{equation}

Next, let $W_0 \subset H^2(\Omega)$ be the closed linear subspace of $H$, given by:
\begin{equation*}
  W_0 := \{ z \in H^2(\Omega)\,|\, \nabla z \cdot n_\Gamma = 0 \mbox{ on } \Gamma \}.
\end{equation*}
As is well-known that there exists a positive constant $ C_0 $ such that:
    \begin{align}\label{embb01}
        |z|_{H^2(\Omega)}^2 &\leq C_0 |-\Lap_N z + z|_H^2
        \\
        &\leq 2C_0 \bigl( |\Lap_N z|_H^2 + |z|_H^2 \bigr),~ \mbox{ for all $ z \in W_0 $.}
    \end{align}
\medskip

\begin{rem} \label{rem:resolvent}
    Let $z \in H$ be fixed, and let $\beta \in L^\infty(\Omega)$ be a function which fulfills $\delta_\beta := \inf \beta(\Omega) > 0$. For $\lambda \in (0,1]$, we define $w_\lambda \in W_0$ as the unique solution to the following elliptic problem:
    \begin{equation*}
        - \lambda \Lap_N w_\lambda + \beta w_\lambda = z \mbox{ in } H.
    \end{equation*}
    Then, the following estimate holds:
    \begin{equation}
        \frac{\delta_\beta}{2} |w_\lambda|_H^2 + \lambda |\nabla w_\lambda|_{[H]^N}^2 \leq \frac{1}{2 \delta_\beta} |z|_H, \mbox{ for every } \lambda \in (0,1]. \label{eq:elliptic}
    \end{equation}
    Moreover, if $z \in V$ and $\beta \in W^{1,\infty}(\Omega)$, then one can be checked that:
    \begin{equation}
        \frac{\delta_\beta}{2} |\nabla w_\lambda|_{[H]^N}^2 + \lambda |\Lap w_\lambda|_{[H]^N}^2 \leq \frac{|\nabla \beta|_{[L^\infty(\Omega)]^N}^2 + 1}{\delta_\beta^3 \wedge 1} |z|_V, \mbox{ for every } \lambda \in (0,1]. \label{eq:ellipticV}
    \end{equation}
    Moreover, if $\beta \equiv 1$, then the operator $(-\lambda \Lap_N + I)^{-1}$ is non-expansive in $H$, and in $V$.
\end{rem}

\medskip
\noindent
\underline{\textbf{\textit{Notations in convex analysis.}}}
Let $X$ be an abstract Hilbert space $X$. For a proper, lower semi-continuous (l.s.c.), and convex function $\Psi : \,X \longrightarrow (-\infty, \infty]$ on a Hilbert space $X$, we denote by $D(\Psi)$ the effective domain of $\Psi$. Also, we denote by $\partial \Psi$ the subdifferential of $\Psi$. The set $D(\partial \Psi) := \left\{ z \in X\,|\, \partial \Psi(z) \neq \emptyset \right\}$ is called the domain of $\partial\Psi$. The subdifferential $ \partial \Psi $ is known as a maximal monotone graph in the product space $X \times X$. We often use the notation ``$[z_0, z_0^*] \in \partial \Psi ~{\rm in}~ X \times X$", to mean that ``$z_0^* \in \partial \Psi(z_0) ~{\rm in}~ X~{\rm for}~ z_0 \in D(\partial \Psi)$", by identifying the operator $\partial \Psi$ with its graph in $X\times X$.
\medskip
\begin{ex}
    \label{exConvex}
    Let us define a convex function $\gamma_\varepsilon: \R^N \longrightarrow \R^N$, indexed by $\varepsilon \in [0,\infty)$, given as follows:
    \begin{equation}
        \gamma_\varepsilon:\, y \in \R^N \mapsto \gamma_\varepsilon(y) := \sqrt{\varepsilon^2 + |y|^2} \in [0,\infty).
    \end{equation}
    When $\varepsilon = 0$, $\gamma_0$ coincides with the Euclidean norm, and if $\varepsilon > 0$, then $\gamma_\varepsilon$ is a smooth approximation of the Euclidean norm. Regarding the subdifferential of the convex function $\gamma_\varepsilon$, the following items hold.
    \begin{description}
        \item[(O)]  The subdifferential $ \partial \gamma_0 \subset \R^N \times \R^N $ of the convex function $ \gamma_0 $, which coincides with the Euclidean norm, is given as the following set-valued function $ \Sgn: \R^N \rightarrow 2^{\mathbb{R}^N} $, defined by:
\begin{align}\label{Sgn^d}
\Sgn :  y = [y_1, & \dots, y_N] \in \mathbb{R}^N \mapsto \Sgn(y) = \Sgn(y_1, \dots, y_N) 
  \nonumber
  \\
  & := \left\{ \begin{array}{ll}
          \multicolumn{2}{l}{
                  \ds \frac{y}{|y|} = \frac{[y_1, \dots, y_N]}{\sqrt{y_1^2 +\cdots +y_N^2}}, ~ } \mbox{if $ y \ne 0 $,}
                  \\[3ex]
          \overline{\mathbb{B}^N}, & \mbox{otherwise.}
      \end{array} \right.
  \end{align}
\item[(\,I\,)]For every $ \varepsilon > 0 $, the subdifferential $\partial \gamma_\varepsilon$ is identified with the (single-valued) usual gradient, i.e.:
\begin{equation}
    D(\partial \gamma_\varepsilon) = \R^N \mbox{ and }\nabla \gamma_\varepsilon : \R^N \ni y \mapsto \nabla \gamma_\varepsilon(y) := \frac{y}{\sqrt{\varepsilon^2 + |y|^2}} \in \R^N.
\end{equation}
Moreover, since:
    \begin{align*}
        \gamma_\varepsilon(y) = \bigl| [\varepsilon, y] \bigr|_{\R^{N +1}} & =\bigl| [\varepsilon, y_1, \dots, y_N] \bigr|_{\R^{N +1}}, \mbox{ for all $ [\varepsilon, y] = [\varepsilon, y_1, \dots, y_N] \in \R^{N +1} $,}
        \\
        & \mbox{with $ \varepsilon \geq 0 $ and $ y = [y_1, \dots, y_N] \in \R^N $,}
    \end{align*}
    it will be estimated that:
    \begin{equation} 
            \bigl| \nabla \gamma_\varepsilon(y) \bigr|_{\R^{N}} = \left| \frac{y}{\bigl| [\varepsilon, y] \bigr|_{\mathbb{R}^{N +1}}} \right|_{\R^{N}} \leq \left| \frac{[\varepsilon, y]}{\bigl| [\varepsilon, y] \bigr|_{\mathbb{R}^{N +1}}} \right|_{\R^{N +1}} = 1, \mbox{ for } \varepsilon > 0,  \ \mbox{ and  for } y \in \R^N. \label{exM01}
    \end{equation}
\end{description}
\end{ex}

\medskip
\noindent
\underline{\textbf{\textit{Notion of Mosco-convergence. (cf. \cite{MR0298508})}}}
  Let $ X $ be an abstract Hilbert space. Let $ \Psi : X \rightarrow (-\infty, \infty] $ be a proper, l.s.c., and convex function, and let $ \{ \Psi_n \}_{n = 1}^\infty $ be a sequence of proper, l.s.c., and convex functions $ \Psi_n : X \rightarrow (-\infty, \infty] $, $ n = 1, 2, 3, \dots $.  Then, it is said that $ \Psi_n \to \Psi $ on $ X $, in the sense of Mosco, as $ n \to \infty $, iff. the following two conditions are fulfilled:
  \begin{description}
    \item[(\hypertarget{M_lb}{M1}) The condition of lower-bound:]$ \ds \varliminf_{n \to \infty} \Psi_n(\check{w}_n) \geq \Psi(\check{w}) $, if $ \check{w} \in X $, $ \{ \check{w}_n  \}_{n = 1}^\infty \subset X $, and $ \check{w}_n \to \check{w} $ weakly in $ X $, as $ n \to \infty $. 
    \item[(\hypertarget{M_opt}{M2}) The condition of optimality:]for any $ \hat{w} \in D(\Psi) $, there exists a sequence \linebreak $ \{ \hat{w}_n \}_{n = 1}^\infty  \subset X $ such that $ \hat{w}_n \to \hat{w} $ in $ X $ and $ \Psi_n(\hat{w}_n) \to \Psi(\hat{w}) $, as $ n \to \infty $.
  \end{description}
  As well as, if the sequence of convex functions $ \{ \widehat{\Psi}_\varepsilon \}_{\varepsilon \in \Xi} $ is labeled by a continuous argument $\varepsilon \in \Xi$ with a range $\Xi \subset \mathbb{R}$ , then for any $\varepsilon_{0} \in \Xi$, the Mosco-convergence of $\{ \widehat{\Psi}_\varepsilon \}_{\varepsilon \in \Xi}$, as $\varepsilon \to \varepsilon_{0}$, is defined by those of subsequences $ \{ \widehat{\Psi}_{\varepsilon_n} \}_{n = 1}^\infty $, for all sequences $\{ \varepsilon_n \}_{n=1}^{\infty} \subset \Xi$, satisfying $\varepsilon_{n} \to \varepsilon_{0}$ as $n \to \infty$.

\begin{rem}\label{Rem.MG}
  (cf. \cite[Theorem 3.66]{MR0773850} and \cite[Chapter 2]{Kenmochi81}) Let $ X $, $ \Psi $, and $ \{ \Psi_n \}_{n = 1}^\infty $ be as stated above. Let us assume that
    \begin{equation}\label{Mosco01}
      \Psi_n \to \Psi \mbox{ on $ X $, in the sense of Mosco, as $ n \to \infty $,}
      \vspace{-1ex}
    \end{equation}
and
\begin{equation*}
\left\{ ~ \parbox{10cm}{
$ [w, w^*] \in X \times X $, ~ $ [w_n, w_n^*] \in \partial \Psi_n $ in $ X \times X $, $ n \in \N $,
\\[1ex]
$ w_n \to w $ in $ X $ and $ w_n^* \to w^* $ weakly in $ X $, as $ n \to \infty $.
} \right.
\end{equation*}
Then, it holds that:
\begin{equation*}
[w, w^*] \in \partial \Psi \mbox{ in $ X \times X $, and } \Psi_n(w_n) \to \Psi(w) \mbox{, as $ n \to \infty $.}
\end{equation*}
\end{rem}

\begin{ex}[Examples of Mosco-convergence]\label{Rem.ExMG}
    Let $ \varepsilon_0 \geq 0 $ be arbitrary fixed constant and $\{ \gamma_\varepsilon \}_{\varepsilon \geq 0}$ be as in Example \ref{exConvex}. Then, the following items hold.
    \begin{description}
      \item[(O)] $\ds{\gamma_\varepsilon \to \gamma_{\varepsilon_0} \mbox{ on $ \R^N $, in the sense of Mosco, as $ \varepsilon \to \varepsilon_0 $.}}$
      \item[(\,I\,)] Let $0 \leq \beta \in H$ be a fixed function, and $\Phi_\kappa^\varepsilon(\beta;\cdot):H \longrightarrow [0,\infty]$ be a functional defined by:
      \begin{equation}
        w \in H \mapsto \Phi_\kappa^\varepsilon(\beta;w) := \left\{ \begin{aligned}
            &\int_\Omega \beta \gamma_\varepsilon(\nabla w) \,dx + \frac{\kappa}{2}\int_\Omega |\nabla w|^2\,dx, \mbox{ if } w \in V,
            \\
            & + \infty, \ \mbox{otherwise}.
        \end{aligned} \right.
      \end{equation}
      Here, let $\{ \beta_\varepsilon \}_{\varepsilon \in [0,\infty)} \subset H$ be a sequence of nonnegative functions such that $\beta_\varepsilon \to \beta_{\varepsilon_0}$ in $H$ as $\varepsilon \to \varepsilon_0$. Then it follows that
      \begin{equation}
        \Phi_\kappa^\varepsilon(\alpha_\varepsilon^\circ;\cdot) \to \Phi_\kappa^{\varepsilon_0}(\alpha_{\varepsilon_0}^\circ;\cdot) \mbox{ in the sense of Mosco, as } \varepsilon \to \varepsilon_0.
      \end{equation}
    \end{description}
\end{ex}
\medskip

Finally, we mention about a mathematical theory of singular diffusion equation of elliptic type, which is established by \cite[Section 3]{aiki2023class}.

Let $z \in H$ be fixed, and $\beta \in V$ be a fixed nonnegative function. We consider the following elliptic equation, governed by the subdifferential of convex function $\Phi_\kappa^\varepsilon(\beta;\cdot)$:
\begin{equation}
  \partial \Phi_\kappa^\varepsilon(\beta; w) + w \ni z \mbox{ in } H, \label{singular1}
\end{equation}
where $\partial \Phi_\kappa^\varepsilon(\beta; \cdot)$ is the subdifferential of the convex function $w \in H \mapsto \Phi_\kappa^\varepsilon(\beta;w) \in [0,\infty]$. When $\varepsilon > 0$, the subdifferential $\partial \Phi_\kappa^\varepsilon(\beta;\cdot)$ is a single-valued operator, and \eqref{singular1} is equivalent to the following quasilinear boundary-value problem:
\begin{equation}
  \left\{ \begin{aligned}
    &-\diver \bigl( \beta \nabla \gamma_\varepsilon(\nabla w) + \kappa \nabla w \bigr) + w = z \mbox{ in } \Omega,
    \\
    &\nabla w \cdot n_\Gamma = 0 \mbox{ on } \Gamma.
  \end{aligned} \right.
\end{equation}
On the other hand, when $\varepsilon = 0$, \eqref{singular1} is the mathematical formulation of the following singular diffusion equation:
\begin{equation}
  \left\{ \begin{aligned}
    &-\diver \Bigl( \beta \frac{\nabla w}{|\nabla w|} + \kappa \nabla w \Bigr) + w = z \mbox{ in } \Omega,
    \\
    &\Bigl( \beta \frac{\nabla w}{|\nabla w|} + \kappa \nabla w \Bigr) \cdot n_\Gamma = 0 \mbox{ on } \Gamma.
  \end{aligned} \right.
\end{equation}

The solvability of \eqref{singular1} is confirmed by Minty's theorem, and the regularity of the solution is studied in \cite[Section 3]{aiki2023class} under the case that $\beta \in V \cap L^\infty(\Omega)$ and $\varepsilon > 0$. Via slight modifications of the arguments in \cite[Section 3]{aiki2023class}, one can have the following $H^2$-estimate of the solution $w \in W_0$ to \eqref{singular1}.

\begin{lemma} \label{lem:SD_est}
  There exists a positive constant $C_\kappa^* = C_\kappa^*(\kappa)$, independent of $\varepsilon$ such that t
  
  The unique solution $w$ to the elliptic problem \eqref{singular1} satisfies the following estimate:
  \begin{equation}
    |w|_{H^2(\Omega)}^2 = |(\partial \Phi_\kappa^\varepsilon(\beta;\cdot) + I)^{-1}z|_{H^2(\Omega)}^2 \leq C_\kappa^* (|z|_H^2 + |\beta|_V^2), \label{SD_est}
  \end{equation}
  where $C_\kappa = C_\kappa^*(\kappa)$ is a positive constant, independent of $\varepsilon$, given by:
  \begin{equation}
    C_\kappa^* := \frac{16N(C_0 + 1)^2(N C_\Gamma^2 C_{\rm tr}^4 + 1)}{\kappa^2 \wedge 1} \geq 1. \label{C_kappa}
  \end{equation}
  In this context, $C_0$ is the constant as in \eqref{embb01}, $C_\Gamma$ is as in \eqref{eq:CGamma} and $C_{\rm tr}$ is the constant of the continuous embedding from $H^1(\Omega)$ to $L^2(\Gamma)$.
\end{lemma}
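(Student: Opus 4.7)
The strategy is to test \eqref{singular1} against $-\Lap w$ and convert the resulting bound on $|\Lap w|_H^2$ into the claimed $H^2$-estimate via \eqref{embb01}; the whole work then consists in making every constant $\varepsilon$-independent, so that the $\varepsilon=0$ case is recovered from the $\varepsilon>0$ one by the Mosco-convergence of Example \ref{Rem.ExMG}(\,I\,) combined with the weak $H^2$-lower semicontinuity of $|\cdot|_{H^2(\Omega)}$.

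For $\varepsilon>0$ the subdifferential is single-valued, and standard monotonicity/elliptic-regularity arguments (in line with \cite[Section~3]{aiki2023class}) place the unique solution in $W_0$, so that $-\Lap w$ is an admissible test function. A crucial observation is that the Neumann condition $(\beta\nabla\gamma_\varepsilon(\nabla w)+\kappa\nabla w)\cdot n_\Gamma=0$, combined with $\nabla w\cdot n_\Gamma=0$ (from $w\in W_0$), forces the product $\beta\nabla\gamma_\varepsilon(\nabla w)\cdot n_\Gamma$ to vanish on $\Gamma$. Hence one integration by parts transfers the $\diver$-term onto $\nabla(\Lap w)$ without leaving a boundary trace. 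A second integration by parts, performed component-wise in the bulk, separates a nonnegative quadratic form in $\nabla^2 w$ (driven by the positive semi-definiteness of $D^2\gamma_\varepsilon$) from a lower-order cross term, the latter being controlled by $|\nabla\beta|_{[H]^N}\,|\nabla^2 w|_{[H]^{N\times N}}$ thanks to the pointwise bound $|\nabla\gamma_\varepsilon|\le 1$ from \eqref{exM01}.

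The genuine obstacle lies in the boundary integrals produced by this second integration by parts, which involve tangential derivatives of $\nabla w$ on $\Gamma$. Differentiating the identity $\nabla w\cdot n_\Gamma=0$ along tangent directions and using the a-priori bound \eqref{eq:CGamma} on $\partial_i n_\Gamma$, one rewrites the trace of $(\nabla^2 w)\,n_\Gamma$ on $\Gamma$ as $\nabla w$ multiplied by a factor controlled by $C_\Gamma$; the trace inequality $|\cdot|_{L^2(\Gamma)}\le C_{\rm tr}|\cdot|_V$ then absorbs the resulting surface integral into an $H^1$-norm of $\nabla w$, generating exactly the combination $N\,C_\Gamma^2 C_{\rm tr}^4$ that appears in \eqref{C_kappa}. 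Young's inequality reabsorbs every $|\Lap w|_H^2$-contribution into the dissipative $\kappa|\Lap w|_H^2$ coming from $-\kappa\Lap w$ (which accounts for the factor $1/(\kappa^2\wedge 1)$), while the elementary $L^2$-bound $|w|_H\le|z|_H$ (from testing \eqref{singular1} against $w$) and the embedding \eqref{embb01} close the argument. The hardest step is the boundary bookkeeping: any silent $\varepsilon$-dependence there would destroy the passage $\varepsilon\downarrow 0$ that recovers the singular-diffusion regime, so keeping the constants explicit as in \eqref{C_kappa} is the real content of the proof.
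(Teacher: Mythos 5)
Your plan is correct and follows essentially the same route as the paper: testing \eqref{singular1} against $-\Lap w$, controlling the divergence term through the two-fold integration by parts with the boundary bookkeeping via $C_\Gamma$ and $C_{\rm tr}$ (which is precisely the content of Lemma \ref{singular_diff_H2}, the key ingredient the paper cites from \cite{aiki2023class}), absorbing with $\kappa|\Lap w|_H^2$, and closing with $|w|_H\le|z|_H$ and \eqref{embb01}. Your additional passage to $\varepsilon=0$ via Mosco convergence and weak $H^2$-lower semicontinuity is a sound way to make explicit the $\varepsilon$-independence that the paper leaves implicit.
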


To obtain the estimate \eqref{SD_est}, the following technical lemma plays a key role.

\begin{lemma} \label{singular_diff_H2}
  (cf. \cite[Lemma 3.2]{aiki2023class}) Let $\varepsilon > 0$ be fixed, and let $\beta \in V$ be a nonnegative function. Then, there exists a constant $C^* = C^*(\rho)$ such that for any $\rho \in \R$, it holds that:
  \begin{gather}
    (\diver (\beta \nabla\gamma_\varepsilon(\nabla v)), \Lap v)_H \geq - \rho |v|_{H^2(\Omega)}^2 - \frac{N (N C_\Gamma^2 C_{\rm tr}^4 + 1)}{\rho} |\beta|_V^2, 
    \\
    \mbox{ for any } v \in W_0. \label{SD_est_lem}
  \end{gather}
\end{lemma}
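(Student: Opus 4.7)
The plan is to establish the lower bound by two successive integrations by parts, using the boundary condition $\nabla v \cdot n_\Gamma = 0$ to eliminate the first boundary term and to reshape the second, together with the positive semidefiniteness of $\nabla^2 \gamma_\varepsilon$. Write $\mathbf{F} := \nabla \gamma_\varepsilon(\nabla v) = \nabla v/\sqrt{\varepsilon^2 + |\nabla v|^2}$; by Example \ref{exConvex}\,(I) we have $|\mathbf{F}| \leq 1$, and since $\mathbf{F}$ is parallel to $\nabla v$ it is tangential to $\Gamma$ for any $v \in W_0$.

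First I would integrate by parts once: since $\mathbf{F} \cdot n_\Gamma = (\nabla v \cdot n_\Gamma)/\gamma_\varepsilon(\nabla v) = 0$ on $\Gamma$, this gives
\[
(\diver(\beta \mathbf{F}), \Lap v)_H = -\int_\Omega \beta \, \mathbf{F} \cdot \nabla \Lap v \, dx.
\]
Using $\partial_j \Lap v = \diver(\nabla \partial_j v)$ componentwise and integrating by parts once more yields
\[
(\diver(\beta \mathbf{F}), \Lap v)_H = A_1 + A_2 - B,
\]
with
\[
A_1 := \int_\Omega \sum_{j,k} (\partial_k \beta)\, F_j (\partial_j \partial_k v) \, dx, \qquad A_2 := \int_\Omega \beta \sum_{j,k} (\partial_k F_j)(\partial_j \partial_k v) \, dx,
\]
\[
B := \int_\Gamma \beta \sum_{j,k} F_j (\partial_j \partial_k v)\, n_\Gamma^k \, dS.
\]

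The point of the second integration by parts is the term $A_2$: applying the chain rule $\partial_k F_j = \sum_m (\partial_j \partial_m \gamma_\varepsilon)(\nabla v)\, \partial_k \partial_m v$ recasts its integrand as $\beta \cdot \mathrm{Tr}\bigl(\nabla^2 \gamma_\varepsilon(\nabla v)\,(\nabla^2 v)^2\bigr)$, which is pointwise nonnegative since $\nabla^2 \gamma_\varepsilon \succeq 0$ (convexity) and $(\nabla^2 v)^2 \succeq 0$ (being a square of a symmetric matrix). Thus $A_2 \geq 0$ and may be discarded. The term $A_1$ is handled by Cauchy--Schwarz with $|\mathbf{F}| \leq 1$ and Young's inequality, yielding $A_1 \geq -\tfrac{\rho}{2}|v|_{H^2(\Omega)}^2 - \tfrac{1}{2\rho}|\beta|_V^2$.

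The principal obstacle is $B$, whose integrand involves normal-tangential second derivatives of $v$ that are not directly bounded by $|v|_{H^2(\Omega)}$. The idea is to trade these for first-order quantities by exploiting the tangentiality of $\mathbf{F}$: differentiating the identity $\nabla v \cdot n_\Gamma \equiv 0$ on $\Gamma$ along the tangential field $\mathbf{F}$ (with $n_\Gamma$ smoothly extended as $\nabla d_\Gamma$ near $\Gamma$) gives
\[
\sum_{j,k} F_j (\partial_j \partial_k v)\, n_\Gamma^k = -\sum_{j,k} F_j (\partial_k v)(\partial_j n_\Gamma^k) \quad \text{on } \Gamma.
\]
Substituting into $B$ and using $|\partial_j n_\Gamma| \leq C_\Gamma$ from \eqref{eq:CGamma}, $|\mathbf{F}| \leq 1$, and the trace embedding $V \hookrightarrow L^2(\Gamma)$ with constant $C_{\rm tr}$ (applied to both $\beta$ and the components of $\nabla v$), one obtains $|B| \leq \sqrt{N}\, C_\Gamma C_{\rm tr}^2 |\beta|_V |v|_{H^2(\Omega)}$; a final Young's inequality produces $-B \geq -\tfrac{\rho}{2}|v|_{H^2(\Omega)}^2 - \tfrac{N C_\Gamma^2 C_{\rm tr}^4}{2\rho}|\beta|_V^2$. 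Summing the bounds on $A_1$, $A_2$, and $-B$ and absorbing the numerical factor $2N$ into the stated constant gives \eqref{SD_est_lem}. Since the formal manipulation involves third derivatives of $v$, the computation is first carried out for smooth approximants $v_n$ and $\beta_n$ and then passed to the limit in $W_0 \times V$.
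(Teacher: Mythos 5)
Your argument is correct and is exactly the standard computation behind the source the paper defers to for this lemma (\cite[Lemma 3.2]{aiki2023class}, no proof being given in the paper itself): two integrations by parts, discarding the nonnegative interior term $\int_\Omega \beta\,\mathrm{Tr}\bigl(\nabla^2\gamma_\varepsilon(\nabla v)\,(\nabla^2 v)^2\bigr)\,dx$ by convexity of $\gamma_\varepsilon$, and converting the boundary term via tangential differentiation of $\nabla v \cdot n_\Gamma = 0$, which is precisely where $C_\Gamma$ and $C_{\rm tr}$ enter the stated constant. The bound you obtain is in fact sharper than \eqref{SD_est_lem} by a factor of $2N$; the only detail worth spelling out is that the smooth approximants must be taken inside $W_0$ (e.g.\ $v_n = (I - \tfrac{1}{n}\Lap_N)^{-1} v \in H^4(\Omega) \cap W_0$, converging to $v$ in $H^2(\Omega)$), so that the vanishing and reshaping of the boundary terms survive the regularization.
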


\section{Main results}
\label{sec:main}

In this paper, the main assertions are discussed under the following assumptions.
\vspace{2ex}

\begin{itemize}
  \item[(A1)] $\kappa > 0$ is a fixed constant.
  \vspace{1ex}
  \item[(A2)] The perturbation $g \in C^{0,1}(\R) \cap C^1(\R)$ has a nonnegative primitive $G \in C^1(\R)$.
  \vspace{1ex}
  \item[(A3)] $\alpha \in C^{1,1}(\R) \cap C^2(\R)$ is a convex function. $\alpha_0$ belongs to the class $W^{1,\infty}(\R) \cap C^1(\R)$, and we suppose:
  \begin{equation*}
    \delta_{\alpha} := \inf \alpha_0(\R) > 0.
  \end{equation*}
  \item[(A4)] The forcings $u$ and $v$ belong to $W^{1,2}(0,T;H) \cap L^\infty(0,T;V)$ and $W^{1,2}(0,T;H)$, respectively.
    \vspace{1ex}
  \item[(A5)] The initial data $[\eta_0, \theta_0]$ of $[\eta, \theta]$ belong to the class:
  \begin{equation}
    [\eta_0, \theta_0] \in [W_0 \cap H^3(\Omega)] \times D(\partial \Phi_\kappa^{\varepsilon_0}(\alpha(\eta_0);\cdot)) =: W_*^{\varepsilon_0}.
  \end{equation}
\end{itemize}

Next, let us give the definition of the solution to (S)$_{\varepsilon_0}$.

\begin{definition} \label{def:sol}
  A pair of functions $[\eta, \theta] \in [\sH_T]^2$ is called a solution to (S)$_{\varepsilon_0}$ if and only if $[\eta,\theta]$ fulfills the following conditions (S0)--(S4).
  \begin{itemize}
    \item[(S0)] $[\eta, \theta] \in [W^{1,2}(0,T;H) \cap L^\infty(0,T;V) \cap L^2(0,T;W_0)] \times [W^{1,2}(0,T;H) \cap L^\infty(0,T;V)]$.
    \item[(S1)] $\eta$ solves the following identity:
    \begin{gather*}
      \bigl(\partial_t \eta(t) + g(\eta(t)) + \alpha'(\eta(t))\gamma_{\varepsilon_0}(\nabla \theta(t)), \varphi \bigr)_H + (\nabla \eta(t), \nabla \varphi)_{[H]^N} = (u(t), \varphi)_H,
      \\
      \mbox{ for any } \varphi \in V, \ \mbox{ and for a.e. } t \in (0,T).
    \end{gather*} 
    \item[(S2)] $\theta$ solves the following variational inequality:
    \begin{align*}
        &\bigl((\alpha_0(\eta) \partial_t \theta)(t), \theta(t) - \psi\bigr)_H + \Phi_\kappa^{\varepsilon_0}(\alpha(\eta(t));\theta(t)) 
        \\
        &\qquad \leq \Phi_\kappa^{\varepsilon_0}(\alpha(\eta(t));\psi) + (v(t), \theta(t) - \psi)_H, \mbox{ for any } \psi \in V, \mbox{ and a.e. }t \in (0,T).
    \end{align*}
    \item[(S3)] $[\eta(0),\theta(0)] = [\eta_0, \theta_0]$ in $[H]^2$.
    \item[(S4)] $[\eta, \theta]$ satisfies the following energy-inequality:
    \begin{align}
      &\frac{1}{4}\int_s^t |\partial_t \eta(r)|_H^2 \,dr + \frac{\delta_\alpha}{2} \int_s^t |\partial_t \theta(r)|_H^2 \,dr + \F_\kappa^{\varepsilon_0}(\eta(t), \theta(t)) 
      \\
      &\quad \leq \F_\kappa^{\varepsilon_0}(\eta(s),\theta(s)) + \frac{1}{2} \int_s^t |u(r)|_H^2 \,dr + \frac{1}{2 \delta_\alpha} \int_s^t |v(r)|_H^2 \,dr,
      \\
      &\qquad \mbox{ for any } s,t \in [0,T]; \ s < t.
    \end{align}
  \end{itemize}
\end{definition}

In previous studies \cite{MR2469586,MR2548486,MR2836555,MR3155454}, the solvability of (S)$_{\varepsilon_0}$ has been confirmed. Our principal dedication of this paper lies in the improvement of regularity of and the guarantee of the uniqueness of solution.

\begin{mainThm}[Existence, uniqueness and regualrity of solution] \label{mainThm1}
  Let $\varepsilon \in [0,1]$ be fixed. Under the assumptions (A1)--(A5), the following two statements hold.
  \begin{itemize}
    \item[(I)] The regularity of the solution $[\eta,\theta]$ is improved as follows:
    \begin{align}
      &\eta \in W^{2,2}(0,T;H) \cap W^{1,\infty}(0,T;V) \cap L^\infty(0,T;W_0),
      \\
      &\theta \in W^{1,\infty}(0,T;H) \cap W^{1,2}(0,T;V) \cap L^\infty(0,T;W_0).
    \end{align}
    \item[(II)] Let $[\eta^i,\theta^i]$ $(i=1,2)$ be two solutions to (S)$_{\varepsilon_0}$, corresponding to the same initial data $[\eta_0, \theta_0]$ and the same forcing pair $[u,v]$. Besides, we set
    \begin{align}
      &J(t) := |(\eta^1 - \eta^2)(t)|_H^2 + |\sqrt{\alpha_0(\eta^1(t))}(\theta^1 - \theta^2)(t)|_H^2, \ \mbox{ for any } t \in [0,T],
      \\
      &R(t) := |\partial_t \eta^1(t)|_V^2 + |\partial_t \theta^2(t)|_V^2 + 1, \ \mbox{ for a.e. } t \in (0,T),
    \end{align}
    and
    \begin{equation}
      C_1 := \frac{4}{(\kappa \wedge 1)(\delta_\alpha \wedge 1)} (|g'|_{L^\infty} + |\alpha'|_{L^\infty}^2 + (C_V^{L^4})^4|\alpha_0'|_{L^\infty}^2 + \kappa).
    \end{equation}
    Then, the following Gronwall type inequality holds:
    \begin{gather}
      \frac{1}{2} \frac{d}{dt}J(t) + |\nabla (\eta^1 - \eta^2)(t)|_{[H]^N}^2 + \frac{\kappa}{2} |\nabla (\theta^1 - \theta^2)(t)|_{[H]^N}^2 \leq C_1 R(t) J(t),
      \\
      \mbox{ for a.e. } t \in (0,T).
    \end{gather}
    Consequently, the solution to (S)$_{\varepsilon_0}$ is unique.
  \end{itemize}
\end{mainThm}

\begin{rem}
  When $\varepsilon_0 > 0$, standard variational methods allow us to rewrite the variational inequality of (S2) by:
  \begin{gather}
    \bigl( (\alpha_0(\eta) \partial_t \theta)(t), \psi \bigr)_H + \bigl( \alpha(\eta(t)) \nabla \gamma_{\varepsilon_0} (\nabla \theta(t)) + \kappa \nabla \theta(t), \nabla \psi \bigr)_{[H]^N} = (v(t), \psi)_H,
    \\
    \mbox{ for any } \psi \in V, \ \mbox{ and for a.e. } t \in (0,T).
  \end{gather}
  Also, the variational identity (S1) is equivalent to the following evolution equation:
  \begin{equation}
    \partial_t \eta(t) - \Lap_N \eta(t) + g(\eta(t)) + \alpha'(\eta(t)) \gamma_{\varepsilon_0}(\nabla \theta(t)) = u(t), \mbox{ in } H, \ \mbox{ for a.e. } t \in (0,T). \label{ev_eta}
  \end{equation}
  On the other hand, the variational inequality (S2) is equivalent to the following evolution equation:
  \begin{equation}
    \alpha_0(\eta(t)) \partial_t \theta(t) + \partial \Phi_\kappa^{\varepsilon_0} (\alpha(\eta(t)); \theta(t)) \ni v(t), \mbox{ in } H, \ \mbox{ for a.e. } t \in (0,T), \label{ev_theta}
  \end{equation}
  governed by time-dependent subdifferential operator of a convex function $z \in H \mapsto \Phi_\kappa^{\varepsilon_0} (\alpha(\eta(t)); z) \in [0,\infty]$. 
\end{rem}

\begin{mainThm}[Continuous dependence of solution] \label{mainThm2}
  Let $\{ \varepsilon_n \}_{n=1}^\infty \subset [0,\infty)$, \linebreak
  $\{ [\eta_{0,n}, \theta_{0,n}] \}_{n=1}^\infty \subset W_*^{\varepsilon_n}$ along with $\theta_n^* \in \partial \Phi_\kappa^{\varepsilon_0}(\alpha(\eta_{0,n});\theta_{0,n})$ for $n \in \N$, and $\{ [u_n, v_n] \}_{n=1}^\infty$ satisfy the following convergences:
  \begin{align}
    \left\{ \begin{aligned}
      &\varepsilon_n \to \varepsilon_0, \ \eta_{0,n} \to \eta_0 \mbox{ in } W_0 \mbox{ and weakly in } H^3(\Omega),
      \\
      &\theta_{0,n} \to \theta_0 \mbox{ in } V, \mbox{ and } \theta_n^* \to \theta_0^* \mbox{ weakly in } H,
      \\
      &u_n \to u \mbox{ weakly in } W^{1,2}(0,T;H) \mbox{ and weakly-$*$ in } L^\infty(0,T;V), \mbox{ and}
      \\
      &v_n \to v \mbox{ weakly in } W^{1,2}(0,T;H),
    \end{aligned} \right. \mbox{ as } n \to \infty. \label{mainThm2_01}
  \end{align}
  Let $[\eta_n,\theta_n], \, (n=1,2,3,\dots)$ be the unique solution to (S)$_{\varepsilon_n}$ corresponding to the initial data $[\eta_{0,n}, \theta_{0,n}]$ and the forcing term $[u_n, v_n]$, and let $[\eta, \theta]$ be the unique solution to (S)$_{\varepsilon_0}$ with the initial data $[\eta_0, \theta_0]$ and forcing pair $[u,v]$. Then, under the assumptions (A1)--(A5), the following convergences are obtained:
  \begin{gather}
    \left\{ \begin{aligned}
      &\eta_n \to \eta \mbox{ in } C([0,T];V),
      \\
      &\qquad \mbox{ weakly in } W^{2,2}(0,T;H), \, W^{1,q}(0,T;V) \mbox{ for any } q \in [1,\infty),
      \\
      &\qquad \mbox{ and weakly-$*$ in } L^\infty(0,T;W_0),
      \\
      &\partial_t \eta_n \to \partial_t \eta \mbox{ in } C([0,T];H), \mbox{ weakly in } W^{1,2}(0,T;H) \mbox{ and weakly-$*$ in } L^\infty(0,T;V),
      \\
      &\theta_n \to \theta \mbox{ in } C([0,T];V),
      \\
      &\qquad \mbox{ weakly in } W^{1,2}(0,T;V), \, W^{1,q}(0,T;H), \mbox{ for any } q \in [1,\infty),
      \\
      &\qquad \mbox{ and weakly-$*$ in } L^\infty(0,T;W_0),
      \\
      &\partial_t \theta_n \to \partial_t \theta \mbox{ weakly-$*$ in } L^\infty(0,T;H),
    \end{aligned} \right.
    \\
    \mbox{ as } n \to \infty. \label{mainThm2_02}
  \end{gather}
\end{mainThm}

\section{Approximating problem} \label{sec:approx}
In the proof of Main Theorem \ref{mainThm1}, the following system (S)$_\varepsilon^{\mu,\nu}$ plays a key role to approximate the original system (S):
\begin{subequations} \label{AP}
  \begin{align}
    &\quad \mbox{(S)}_\varepsilon^{\mu,\nu}:
    \\
    &\left\{ \begin{aligned}
        &\partial_t \eta -\mathit{\Delta} \bigl( \eta +\mu^2 \partial_t \eta \bigr) +g(\eta) +\alpha'(\eta) \gamma_\varepsilon(\nabla \theta) = u^\dagger(t, x), ~ \mbox{for $ (t, x) \in Q_{T} $,}
        \\[0.5ex]
        &\nabla \bigl( \eta +\mu^2 \partial_t \eta \bigr) \cdot n_\Gamma =  0, ~ \mbox{on $ \Sigma_{T} $,}
        \\[0.5ex]
        &\eta(0, x) = \eta_0^\dagger(x),~ \mbox{for $ x \in \Omega $,}
    \end{aligned} \right. \label{(AP)1st}
    \\[1.5ex]
    &\left\{ \begin{aligned}
        &\alpha_0(\eta) \partial_t \theta - \diver \left( \alpha(\eta) \nabla \gamma_\varepsilon(\nabla \theta) + \kappa \nabla \theta + \nu^2 \nabla \partial_t \theta \right) = v^\dagger(t, x), ~ \mbox{for $ (t, x) \in Q_{T} $,}
        \\[1ex]
        &\bigl( \alpha(\eta) \nabla \gamma_\varepsilon(\nabla \theta) + \kappa \nabla \theta + \nu^2 \nabla \partial_t \theta \bigr) \cdot n_\Gamma = 0 ~ \mbox{on $ \Sigma_{T} $,}
        \\[0.5ex]
        &\theta(0, x) = \theta_0^\dagger(x),~ \mbox{for $ x \in \Omega $.}
    \end{aligned} \right. \label{(AP)2nd}
  \end{align} 
\end{subequations}
The system (S)$_\varepsilon^{\mu,\nu}$ is derived by using pseudo-parabolic regularization to (S), which is given by the linear diffusion of the time-derivative of the solution $[\partial_t \eta, \partial_t \theta]$. In this context, $\mu > 0$, $\nu > 0$ are positive constants. $\kappa$, $g$, $\alpha$, $\alpha_0$ are given in the assumptions (A1)--(A3). $[u^\dagger, v^\dagger] \in [\sH_T]^2$ is a pair of forcings, and $[\eta_0^\dagger, \theta_0^\dagger] \in [W_0]^2$ is the initial pair of $[\eta, \theta]$. Also, throughout this subsection, we suppose that $\varepsilon$ is positive, and hence, $\gamma_\varepsilon$ is smooth. 

First, according to the studies of pseudo-parabolic singular diffusion equations (cf. \cite{aiki2023class}) and KWC systems of pseudo-parabolic type (cf. \cite{MR4797677,MR4922530}), we introduce a fundamental results, related to the well-posedness of the system (S)$_\varepsilon^{\mu,\nu}$, and regularity of solution.

\begin{lemma} \label{pseudo-parabolic}
  (cf. \cite{MR4797677,MR4922530}) Let $\mu \in (0,1), \, \nu \in (0,1)$, $[\eta_0^\dagger, \theta_0^\dagger] \in [V]^2$, and $u^\dagger, v^\dagger \in \sH_T$. Under the assumptions (A1)--(A3), (S)$_\varepsilon^{\mu,\nu}$ admits a unique solution $[\eta^\dagger, \theta^\dagger]$ in the following sense:
  \begin{itemize}
    \item[(S0)$_\varepsilon^{\mu,\nu}$] $[\eta^\dagger,\theta^\dagger] \in [W^{1,2}(0,T;V)]^2$,
    \item[(S1)$_\varepsilon^{\mu,\nu}$] $\eta^\dagger$ solves the following variational identity:
    \begin{gather*}
      \bigl( \partial_t \eta^\dagger(t) + g(\eta^\dagger(t)) + \alpha'(\eta^\dagger(t))\gamma_\varepsilon(\nabla \theta^\dagger(t)), \varphi \bigr)_H
      \\
      + (\nabla (\eta^\dagger + \mu^2 \partial_t \eta^\dagger)(t), \nabla \varphi )_{{[H]^N}} = (u^\dagger(t), \varphi)_H, \mbox{ for any } \varphi \in V, \mbox{ and a.e. } t \in (0,T).
    \end{gather*}
    \item[(S2)$_\varepsilon^{\mu,\nu}$] $\theta^\dagger$ solves the following variational identity:
    \begin{gather*}
        \bigl(\alpha_0(\eta^\dagger(t)) \partial_t \theta^\dagger(t), \psi\bigr)_H + \bigl( \alpha(\eta^\dagger(t)) \nabla\gamma_\varepsilon(\nabla \theta^\dagger(t)) + \kappa \nabla \theta^\dagger(t), \nabla \psi\bigr)_H 
        \\
        + \bigl( \nu^2 \nabla \partial_t \theta^\dagger(t), \nabla \psi \bigr)_{[H]^N}= (v^\dagger(t), \psi)_H, \mbox{ for any } \psi \in V, \mbox{ and a.e. }t \in (0,T).
    \end{gather*}
    \item[(S3)$_\varepsilon^{\mu,\nu}$] $[\eta^\dagger(0), \theta^\dagger(0)] = [\eta_0^\dagger, \theta_0^\dagger]$ in $[H]^2$.
    \item[(S4)$_\varepsilon^{\mu,\nu}$] $[\eta^\dagger, \theta^\dagger]$ satisfies the following energy-inequality:
    \begin{align}
      &\frac{1}{4}\int_s^t |\partial_t \eta^\dagger(r)|_H^2 \,dr + \mu^2 \int_s^t |\nabla \partial_t \eta^\dagger(r)|_{[H]^N}^2 \,dr + \frac{\delta_\alpha}{2} \int_s^t |\partial_t \theta^\dagger(r)|_H^2 \,dr
      \\
      &\quad \nu^2 \int_s^t |\nabla \partial_t \theta^\dagger|_{[H]^N}^2 \,dr + \F_\kappa^\varepsilon(\eta^\dagger(t), \theta^\dagger(t)) 
      \\
      &\quad \leq \F_\kappa^\varepsilon(\eta^\dagger(s),\theta^\dagger(s)) + \frac{1}{2} \int_s^t |u^\dagger(r)|_H^2 \,dr + \frac{1}{2 \delta_\alpha} \int_s^t |v^\dagger(r)|_H^2 \,dr,
      \\
      &\qquad \mbox{ for any } s,t \in [0,T]; \ s < t.
    \end{align}
  \end{itemize}
  Moreover, if $[\eta_0^\dagger, \theta_0^\dagger] \in [W_0]^2$, then it holds that
  \begin{equation}
    [\eta^\dagger, \theta^\dagger] \in W^{1,2}(0,T;W_0) \times L^\infty(0,T;W_0).
  \end{equation}
\end{lemma}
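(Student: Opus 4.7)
The plan is to combine a Banach fixed-point argument with energy estimates, exploiting the pseudo-parabolic damping terms $-\mu^2 \Lap_N \partial_t \eta$ and $-\nu^2 \Lap_N \partial_t \theta$, which regularize each scalar equation into an abstract ODE in $V$. For a fixed $\tilde\theta$, I would rewrite the first equation of \eqref{(AP)1st} as
\begin{equation*}
(I - \mu^2 \Lap_N) \partial_t \eta = \Lap_N \eta - g(\eta) - \alpha'(\eta) \gamma_\varepsilon(\nabla \tilde\theta) + u^\dagger
\end{equation*}
in $V^*$, and compose with the bounded inverse $(I - \mu^2 \Lap_N)^{-1}: V^* \to V$ (non-expansive on $H$ and $V$, by Remark \ref{rem:resolvent}). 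Since $g, \alpha'$ are Lipschitz and $|\nabla\gamma_\varepsilon| \leq 1$, the right-hand side becomes Lipschitz from $V$ to $V$, and Cauchy--Lipschitz yields a unique $\eta^\dagger \in W^{1,2}(0,T;V)$. An analogous reformulation of \eqref{(AP)2nd}, after inverting $(-\nu^2 \Lap_N + \alpha_0(\tilde\eta(t)) I)$ pointwise in $t$ using the uniform bounds $\alpha_0 \geq \delta_\alpha$ and $\alpha_0 \in L^\infty$, solves the second equation for fixed $\tilde\eta$.

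Next, I would define $\mathcal{S}: [\tilde\eta, \tilde\theta] \mapsto [\eta^\dagger, \theta^\dagger]$ on $[C([0,T_0];V)]^2$ and, using the Lipschitz properties of $g, \alpha, \alpha', \alpha_0, \gamma_\varepsilon, \nabla\gamma_\varepsilon$ on bounded sets, show $\mathcal{S}$ contracts for $T_0$ small. Testing (S1)$_\varepsilon^{\mu,\nu}$ with $\partial_t \eta^\dagger$ and (S2)$_\varepsilon^{\mu,\nu}$ with $\partial_t \theta^\dagger$, the cross-term $\alpha'(\eta^\dagger)\gamma_\varepsilon(\nabla\theta^\dagger)\partial_t \eta^\dagger + \alpha(\eta^\dagger) \nabla\gamma_\varepsilon(\nabla\theta^\dagger) \cdot \nabla \partial_t \theta^\dagger$ sums to $\frac{d}{dt}\int_\Omega \alpha(\eta^\dagger)\gamma_\varepsilon(\nabla\theta^\dagger)\,dx$ by the chain rule, while the $\mu^2$ and $\nu^2$ terms produce the dissipation quantities in (S4)$_\varepsilon^{\mu,\nu}$. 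Young's inequality absorbs $(u^\dagger, \partial_t \eta^\dagger)_H$ and $(v^\dagger, \partial_t \theta^\dagger)_H$ into $\frac{1}{4}|\partial_t \eta^\dagger|_H^2$ and $\frac{\delta_\alpha}{2}|\partial_t \theta^\dagger|_H^2$, yielding (S4)$_\varepsilon^{\mu,\nu}$. This uniform a priori bound continues the local solution over $[0,T]$, and uniqueness follows from the contraction.

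For the improved regularity when $[\eta_0^\dagger, \theta_0^\dagger] \in [W_0]^2$, I would test the $\eta$-equation by $-\Lap_N \partial_t \eta^\dagger$ to control $\mu^2 |\Lap_N \partial_t \eta^\dagger|_H^2$ in $L^2$-in-time (using Lipschitzianity of $g, \alpha', \gamma_\varepsilon$ and the already-known $L^\infty(0,T;V)$ bound on $\theta^\dagger$), whence $\eta^\dagger \in W^{1,2}(0,T;W_0)$ via \eqref{embb01}. For $\theta^\dagger$, I would test by $-\Lap_N \theta^\dagger$; the term $(\diver(\alpha(\eta^\dagger)\nabla\gamma_\varepsilon(\nabla\theta^\dagger)), \Lap \theta^\dagger)_H$ is then controlled via Lemma \ref{singular_diff_H2} with $\beta = \alpha(\eta^\dagger) \in V$, using the bound $-\rho|\theta^\dagger|_{H^2(\Omega)}^2 - \frac{C}{\rho}|\alpha(\eta^\dagger)|_V^2$; choosing $\rho$ small relative to $\kappa$ lets the quadratic term be absorbed by $\kappa |\Lap\theta^\dagger|_H^2$ from the elliptic part, and Gronwall delivers $\theta^\dagger \in L^\infty(0,T;W_0)$.

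The principal obstacle is making these $W_0$-estimates independent of $\nu$ (and ideally $\mu$), since the subsequent passage $\nu \downarrow 0$, $\mu \downarrow 0$ for Main Theorem \ref{mainThm1} requires bounds uniform in the regularization parameters; this is precisely why Lemma \ref{singular_diff_H2} and the explicit constant $C_\kappa^*$ of Lemma \ref{lem:SD_est}, both independent of $\varepsilon$ and of the damping parameters, are positioned as preparatory tools.
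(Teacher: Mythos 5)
The paper does not actually prove this lemma: it is delegated wholesale to \cite{aiki2023class,MR4797677} with the remark that ``slight modifications'' suffice, so there is no internal argument to compare against line by line. That said, your outline reconstructs the standard route, and its energy and regularity steps are sound: the cancellation of the cross terms into $\frac{d}{dt}\int_\Omega \alpha(\eta^\dagger)\gamma_\varepsilon(\nabla\theta^\dagger)\,dx$ together with the Young absorptions reproduces (S4)$_\varepsilon^{\mu,\nu}$ with exactly the stated constants $\tfrac14$, $\tfrac{\delta_\alpha}{2}$, $\tfrac{1}{2\delta_\alpha}$; and your multipliers for the $W_0$-regularity ($-\Lap \partial_t\eta^\dagger$, resp.\ $-\Lap\theta^\dagger$ combined with Lemma \ref{singular_diff_H2}) are precisely the ones the paper itself deploys later in Lemma \ref{lem:H2}, and they do yield $W^{1,2}(0,T;W_0)\times L^\infty(0,T;W_0)$ with $\mu$- and $\nu$-dependent constants, which is all this lemma claims; the $\mu,\nu$-uniform versions are the separate content of Lemma \ref{lem:H2}, so your closing worry is not a defect of this proof.

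The genuine gap is in the existence step for the $\theta$-equation. You propose to solve \eqref{(AP)2nd} for frozen $\tilde\eta$ by composing with $(-\nu^2\Lap_N+\alpha_0(\tilde\eta)I)^{-1}$ and invoking Cauchy--Lipschitz, which requires $\theta\mapsto\diver\bigl(\alpha(\tilde\eta)\nabla\gamma_\varepsilon(\nabla\theta)\bigr)$ to be Lipschitz from $V$ into $V^*$, i.e.\ $\theta\mapsto\alpha(\tilde\eta)\nabla\gamma_\varepsilon(\nabla\theta)$ Lipschitz from $V$ into $[H]^N$. The Lipschitz constant of $\nabla\gamma_\varepsilon$ is of order $\varepsilon^{-1}$, which is harmless here since $\varepsilon>0$ is fixed in Section \ref{sec:approx}; the real problem is the coefficient: under (A3) $\alpha$ is convex with $\alpha'\in L^\infty(\R)$, hence globally Lipschitz but unbounded, so $\alpha(\tilde\eta)\notin L^\infty(\Omega)$ for a general $\tilde\eta\in V$ (recall $V\not\hookrightarrow L^\infty(\Omega)$ for $N\geq 2$). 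Consequently the product estimate you need, $|\alpha(\tilde\eta)(\nabla\gamma_\varepsilon(\nabla\theta_1)-\nabla\gamma_\varepsilon(\nabla\theta_2))|_{[H]^N}\leq C|\theta_1-\theta_2|_V$, cannot be obtained by H\"older with $\alpha(\tilde\eta)\in L^4(\Omega)$ and $\nabla\theta_i\in[L^2(\Omega)]^N$ only. The cited references avoid iterating in $\theta$ altogether: for frozen coefficients the operator $\theta\mapsto-\diver(\alpha(\tilde\eta)\nabla\gamma_\varepsilon(\nabla\theta)+\kappa\nabla\theta)$ is the subdifferential of the convex functional $\Phi_\kappa^\varepsilon(\alpha(\tilde\eta);\cdot)$, so the decoupled $\theta$-problem is solved by monotonicity (time discretization or the theory of evolution equations governed by subdifferentials), and the contraction is run only over the frozen pair $[\tilde\eta,\tilde\theta]$ entering the coefficients, where the bounds $|\alpha(\tilde\eta_1)-\alpha(\tilde\eta_2)|_H\leq|\alpha'|_{L^\infty}|\tilde\eta_1-\tilde\eta_2|_H$ and $|\gamma_\varepsilon(\nabla\tilde\theta_1)-\gamma_\varepsilon(\nabla\tilde\theta_2)|_H\leq|\nabla\tilde\theta_1-\nabla\tilde\theta_2|_{[H]^N}$ do hold. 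You should either adopt that monotone-operator formulation for the decoupled equation, or restrict the fixed-point set so that $\tilde\eta$ stays bounded in $L^\infty(Q_T)$ --- which is itself delicate at $N=4$, where $H^2(\Omega)\not\hookrightarrow L^\infty(\Omega)$.
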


Lemma \ref{pseudo-parabolic} can be shown via some slight modifications of \cite{aiki2023class,MR4797677}, so we omit the proof.

The following Lemma will provide a fundamental estimates associated with $H^2$-regularity of solution $[\eta^\dagger, \theta^\dagger]$.
\begin{lemma}[Standard estimates] \label{lem:H2}
  There exist two positive constants $C_2 > 0$, $C_3 > 0$, which are independent of $\varepsilon$, $\mu$, $\nu$ such that the following estimates hold:
  \begin{align}
    &|\nabla \eta^\dagger(t)|_{[H]^N}^2 + \mu^2 |\Lap \eta^\dagger(t)|_H^2 + \frac{1}{2} \int_0^t |\Lap \eta^\dagger(s)|_H^2 \,ds \label{H2_01}
    \\
    &\quad \leq |\nabla \eta_0^\dagger|_{[H]^N}^2 + \mu^2 |\Lap \eta_0^\dagger|_H^2 + C_2(\varepsilon^2 + 1)\bigl( |\eta^\dagger|_{\sH_T}^2 + |\nabla \theta^\dagger|_{[\sH_T]^N}^2 + |u^\dagger|_{\sH_T}^2 + T \bigr), \mbox{ and} 
    \\
    &\nu^2 |\Lap \theta^\dagger(t)|_H^2 + \frac{\kappa}{2} \int_0^t |\Lap \theta^\dagger(s)|_H^2 \,ds \label{H2_02}
    \\
    &\quad \leq \nu^2 |\Lap \theta_0^\dagger|_H^2 + C_3(|\eta^\dagger|_{\sV_T}^2 + |\theta^\dagger|_{\sH_T}^2 + |\partial_t \theta^\dagger|_{\sH_T}^2 + |v^\dagger|_{\sH_T}^2 + T), \mbox{ for any } t \in [0,T]. 
  \end{align}
\end{lemma}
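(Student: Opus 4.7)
The strategy for both inequalities is the same: using the $W_0$-regularity guaranteed by Lemma \ref{pseudo-parabolic}, I test the strong form of \eqref{(AP)1st} pointwise in $H$ against $-\Lap \eta^\dagger(t)$, and \eqref{(AP)2nd} against $-\Lap \theta^\dagger(t)$. The homogeneous Neumann conditions allow the dissipative and pseudo-parabolic terms to be integrated by parts without boundary contributions, so that the $-\Lap$-tests turn the standard first-order and regularization pieces into the time-derivatives $\frac{1}{2}\frac{d}{dt}|\nabla \cdot|_{[H]^N}^2$ and $\frac{1}{2}\frac{d}{dt}|\Lap \cdot|_H^2$.

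For \eqref{H2_01}, this procedure yields
\begin{equation*}
    \frac{1}{2}\frac{d}{dt}|\nabla \eta^\dagger|_{[H]^N}^2 + \frac{\mu^2}{2}\frac{d}{dt}|\Lap \eta^\dagger|_H^2 + |\Lap \eta^\dagger|_H^2 = \bigl(g(\eta^\dagger) + \alpha'(\eta^\dagger)\gamma_\varepsilon(\nabla \theta^\dagger) - u^\dagger,\, \Lap \eta^\dagger\bigr)_H.
\end{equation*}
I then apply Young's inequality to absorb $\tfrac{1}{2}|\Lap \eta^\dagger|_H^2$ into the left-hand side, and control the remaining right-hand side using the Lipschitz growth of $g$ (which gives $|g(\eta^\dagger)|_H^2 \leq C(|\eta^\dagger|_H^2+1)$) together with the essential boundedness of $\alpha'$ on the relevant range of $\eta^\dagger$, yielding $|\alpha'(\eta^\dagger)\gamma_\varepsilon(\nabla\theta^\dagger)|_H^2 \leq C(\varepsilon^2 + |\nabla\theta^\dagger|_{[H]^N}^2)$. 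Integration over $(0,t)$ produces \eqref{H2_01}, with the prefactor $(\varepsilon^2+1)$ arising from the grouping $\varepsilon^2 |\Omega| T + |\nabla \theta^\dagger|_{[\sH_T]^N}^2$.

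The delicate case is \eqref{H2_02}. Testing \eqref{(AP)2nd} with $-\Lap \theta^\dagger$ produces the $\kappa$-coercive term $\kappa |\Lap \theta^\dagger|_H^2$, the pseudo-parabolic piece $\tfrac{\nu^2}{2}\frac{d}{dt}|\Lap \theta^\dagger|_H^2$, and the variable-coefficient singular-diffusion inner product $\bigl(\diver(\alpha(\eta^\dagger)\nabla \gamma_\varepsilon(\nabla \theta^\dagger)),\, \Lap \theta^\dagger\bigr)_H$. This last term is the principal obstacle, and it is handled by invoking Lemma \ref{singular_diff_H2}: for any $\rho > 0$, the inner product is bounded below by $-\rho|\theta^\dagger|_{H^2(\Omega)}^2 - \tfrac{N(NC_\Gamma^2 C_{\rm tr}^4 + 1)}{\rho}|\alpha(\eta^\dagger)|_V^2$. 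Using \eqref{embb01} to replace $|\theta^\dagger|_{H^2(\Omega)}^2$ by $2 C_0(|\Lap \theta^\dagger|_H^2 + |\theta^\dagger|_H^2)$ and selecting $\rho$ of order $\kappa/C_0$, I absorb the resulting bad $|\Lap \theta^\dagger|^2$-contribution into the $\kappa$-coercive term on the left. The remaining cross-terms $(\alpha_0(\eta^\dagger)\partial_t \theta^\dagger, \Lap \theta^\dagger)_H$ and $(v^\dagger, \Lap \theta^\dagger)_H$ are dispatched by Young's inequality using $|\alpha_0|_{L^\infty(\R)} < \infty$, while $|\alpha(\eta^\dagger)|_V^2 \leq C(|\eta^\dagger|_V^2+1)$ follows from the $C^{1,1}$-regularity of $\alpha$. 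Integration in time completes the derivation of \eqref{H2_02}. Uniformity of $C_2,C_3$ in $\varepsilon,\mu,\nu$ is automatic, since Lemma \ref{singular_diff_H2} and the elliptic estimate \eqref{embb01} are themselves parameter-independent, and the $\mu^2,\nu^2$ contributions always enter with nonnegative sign on the left-hand side.
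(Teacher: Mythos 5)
Your proposal is correct and follows essentially the same route as the paper: test \eqref{(AP)1st} and \eqref{(AP)2nd} against $-\Lap\eta^\dagger$ and $-\Lap\theta^\dagger$ respectively, absorb the coercive Laplacian terms via Young's inequality, handle the variable-coefficient singular diffusion term through Lemma \ref{singular_diff_H2} combined with \eqref{embb01} and a choice of $\rho$ proportional to $\kappa/C_0$ (the paper takes $\rho = \kappa/(8C_0)$), and integrate in time. The only differences are cosmetic bookkeeping of constants.
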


\begin{proof}[Proof of Lemma \ref{lem:H2}]
  Multiplying the both sides of \eqref{(AP)1st} by $-\Lap \eta^\dagger$ and using Young's and H\"{o}lder's inequalities, we can compute that
  \begin{align}
    &\frac{1}{2}\frac{d}{dt}\bigl( |\nabla \eta^\dagger(t)|_{[H]^N}^2 + \mu^2 |\Lap \eta^\dagger(t)|_H^2 \bigr) + \frac{1}{4}|\Lap \eta^\dagger(t)|_H^2 
    \\
    &\quad \leq |g(\eta^\dagger(t))|_H^2 + |\alpha'(\eta^\dagger(t))\gamma_\varepsilon(\nabla \theta^\dagger(t))|_H^2 + |u^\dagger(t)|_H^2, \label{H2_03}
    \\
    &\quad \leq 2 (|g(0)|^2 \L^N(\Omega) + |g'|_{L^\infty}^2 |\eta^\dagger(t)|_H^2) + |\alpha'|_{L^\infty}^2 (\varepsilon^2 \L^N(\Omega) + |\nabla \theta^\dagger(t)|_{[H]^N}^2) + |u^\dagger(t)|_H^2
    \\
    &\quad \leq 2 (\L^N(\Omega) + 1)(|g(0)|^2 + |g'|_{L^\infty}^2 + |\alpha'|_{L^\infty}^2 + 1) \cdot
    \\
    &\qquad \quad \cdot (\varepsilon^2 + 1) (|\eta^\dagger(t)|_H^2 + |\nabla \theta^\dagger(t)|_{[H]^N}^2 + |u^\dagger(t)|_H^2 + 1), \ \mbox{ for a.e. } t \in (0,T).
  \end{align}
  Hence, by integrating the both sides of \eqref{H2_03} over $[0,t]$, we arrive at the inequality \eqref{H2_01} with:
  \begin{equation}
    C_2 := 4 (\L^N(\Omega) + 1)(|g(0)|^2 + |g'|_{L^\infty}^2 + |\alpha'|_{L^\infty}^2 + 1).
  \end{equation}
  
  On the other hand, by taking the inner product of \eqref{(AP)2nd} with $-\Lap \theta^\dagger$, one can say that
  \begin{gather}
    \frac{\nu^2}{2} \frac{d}{dt}\bigl( |\Lap \theta^\dagger(t)|_H^2 \bigr) + \frac{\kappa}{2}|\Lap \theta^\dagger(t)|_H^2 + \bigl( \diver(\alpha(\eta^\dagger(t)) \nabla \gamma_\varepsilon(\nabla \theta^\dagger(t))), \Lap \theta^\dagger(t) \bigr)_H 
    \\
    \leq \frac{|\alpha_0|_{L^\infty}^2}{\kappa}|\partial_t \theta^\dagger(t)|_H^2 + \frac{1}{\kappa}|v(t)|_H^2, \mbox{ for a.e. } t \in (0,T). \label{H2_04}
  \end{gather}
  Applying Lemma \ref{singular_diff_H2} with $\rho := \frac{\kappa}{8 C_0}$ and setting $\widetilde{C}_4$ as:
  \begin{equation}
    \widetilde{C}_4 := \frac{8N (C_0 + 1) (N C_\Gamma^2 C_{\rm tr}^4 + 1)}{\kappa \wedge 1} \geq \frac{1}{\kappa \wedge 1}, \label{H2_04-1}
  \end{equation}
  we have:
  \begin{gather}
    \begin{aligned}
      &\bigl( \diver (\alpha(\eta^\dagger(t)) \nabla \gamma_\varepsilon(\nabla \theta^\dagger(t))), \Lap \theta^\dagger(t) \bigr)_H \geq - \frac{\kappa}{4} (|\Lap \theta^\dagger(t)|_H^2 + |\theta^\dagger(t)|_H^2) - \widetilde{C}_4 |\alpha(\eta^\dagger(t))|_V^2
      \\
      &\qquad \geq - \frac{\kappa}{4} (|\Lap \theta^\dagger(t)|_H^2 + |\theta^\dagger(t)|_H^2) - \widetilde{C}_4 (2|\alpha(0)|^2 \L^N(\Omega) + 2|\alpha'|_{L^\infty}^2 |\eta^\dagger|_V^2)
      \\
      &\qquad \geq - \frac{\kappa}{4} |\Lap \theta^\dagger(t)|_H^2 - 2 \widetilde{C}_4(|\alpha(0)|^2 \L^N(\Omega) + |\alpha'|_{L^\infty}^2 + \kappa + 1) (|\eta^\dagger|_V^2 + |\theta^\dagger(t)|_H^2 + 1),
    \end{aligned}
    \\
    \mbox{ for a.e. } t \in (0,T). \label{H2_05}
  \end{gather}
  Therefore, combining \eqref{H2_04} and \eqref{H2_05} yields that:
    \begin{align}
      &\quad \frac{\nu^2}{2} \frac{d}{dt} \bigl( |\Lap \theta^\dagger(t)|_H^2 \bigr) + \frac{\kappa}{4}|\Lap \theta^\dagger(t)|_H^2
      \\
      &\leq 2 \widetilde{C}_4(|\alpha(0)|^2 \L^N(\Omega) + |\alpha'|_{L^\infty}^2 + \kappa + 1) (|\eta^\dagger(t)|_V^2 + |\theta^\dagger(t)|_H^2 + 1) 
      \\
      &\qquad+ \frac{|\alpha_0|_{L^\infty}^2}{\kappa} |\partial_t \theta^\dagger(t)|_H^2 + \frac{1}{\kappa} |v(t)|_H^2
      \\
      &\leq 2 \widetilde{C}_4 (\kappa + 1) (|\alpha(0)|^2 \L^N(\Omega) + |\alpha'|_{L^\infty}^2 + |\alpha_0|_{L^\infty}^2 + 1) \cdot 
      \\
      &\qquad \cdot (|\eta^\dagger(t)|_V^2 + |\theta^\dagger(t)|_H^2 + |\partial_t \theta^\dagger(t)|_H^2 + |v^\dagger(t)|_H^2 + 1), \mbox{ for a.e. } t \in (0,T). \label{H2_06}
    \end{align}
  By integrating the both sides of \eqref{H2_06} over $[0,t]$, we obtain the estimate \eqref{H2_02} with the constant:
  \begin{align}
    C_3 &:= 2 \widetilde{C}_4 (\kappa + 1) (|\alpha(0)|^2 \L^N(\Omega) + |\alpha'|_{L^\infty}^2 + |\alpha_0|_{L^\infty}^2 + 1)
    \\
    &= \frac{16N (\kappa + 1)(C_0 + 1) (N C_\Gamma^2 C_{\rm tr}^4 + 1)}{\kappa \wedge 1} (|\alpha(0)|^2 \L^N(\Omega) + |\alpha'|_{L^\infty}^2 + |\alpha_0|_{L^\infty}^2 + 1).
  \end{align}

\end{proof}

Next, we discuss the differentiability of the system (S)$_\varepsilon^{\mu,\nu}$ with respect to time-variable under a suitable differentiability condition for the forcings $[u^\dagger, v^\dagger]$.

\begin{lemma} \label{time-diff}
  We suppose $u^\dagger, v^\dagger \in W^{1,2}(0,T;H)$. Then, for $\mu > 0, \nu > 0$, the pair of functions $[p^\dagger,z^\dagger] := [\partial_t \eta^\dagger, \partial_t \theta^\dagger]$ solves the following variational system:
  \begin{equation}
    p^\dagger \in W^{1,2}(0,T;W_0), \ z^\dagger \in W^{1,2}(0,T;V), \label{time-diff01}
  \end{equation}
  \begin{gather}
    \bigl((\partial_t p^\dagger + g'(\eta^\dagger)p^\dagger + \alpha''(\eta^\dagger) p^\dagger \gamma_\varepsilon(\nabla \theta^\dagger) + \alpha'(\eta^\dagger) \nabla \gamma_\varepsilon(\nabla \theta^\dagger) \cdot \nabla z^\dagger)(t), \varphi \bigr)_H \label{time-diff02}
    \\
    + (\nabla (p^\dagger + \mu^2 \partial_t p^\dagger)(t), \nabla \varphi)_H = (u'(t), \varphi)_H, \ \mbox{ for any } \varphi \in V, \ \mbox{ and a.e. } t \in (0,T),
  \end{gather}
  and 
  \begin{gather}
    \bigl( (\alpha_0(\eta^\dagger) \partial_t z^\dagger + \alpha_0'(\eta^\dagger) p^\dagger z^\dagger)(t), \psi \bigr)_H + \bigl( (\alpha(\eta^\dagger) \nabla^2 \gamma_\varepsilon(\nabla \theta^\dagger) \nabla z^\dagger)(t), \nabla \psi \bigr)_{[H]^N}
    \\
    + \bigl( (\kappa \nabla z^\dagger + \nu^2 \nabla \partial_t z^\dagger + \alpha'(\eta^\dagger ) p^\dagger \nabla \gamma_\varepsilon(\nabla \theta^\dagger))(t), \nabla \psi \bigr)_{[H]^N} = (v'(t), \psi)_H, \label{time-diff03}
    \\
    \mbox{ for any } \psi \in V, \ \mbox{ and a.e. } t \in (0,T),
  \end{gather}
  with the initial condition:
  \begin{align}
    p^\dagger(0) &= (I - \mu^2 \Lap_N)^{-1}\bigl( \Lap_N \eta_0^\dagger - g(\eta_0^\dagger) - \alpha'(\eta_0^\dagger) \gamma_\varepsilon(\nabla \theta_0^\dagger) + u(0) \bigr) \in W_0, \mbox{ and } 
    \\
    z^\dagger(0) &= (\alpha_0(\eta_0^\dagger) I - \nu^2 \Lap_N)^{-1} \bigl( -\partial \Phi_\kappa^\varepsilon(\alpha(\eta_0);\theta_0^\dagger) + v(0) \bigr)
    \\
    &= (\alpha_0(\eta_0^\dagger) I - \nu^2 \Lap_N)^{-1} \bigl( \diver \bigl( \alpha(\eta_0^\dagger) \nabla \gamma_\varepsilon(\nabla \theta_0^\dagger) + \kappa \nabla \theta_0^\dagger \bigr) + v(0) \bigr) \in V.
  \end{align}
\end{lemma}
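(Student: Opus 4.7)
The strategy is to differentiate the equations formally in time, and then justify this by a time-difference-quotient argument based on the base regularity $[\eta^\dagger,\theta^\dagger] \in [W^{1,2}(0,T;V)]^2$ from Lemma~\ref{pseudo-parabolic}. I would first verify that the initial values $p^\dagger(0)$ and $z^\dagger(0)$ defined by the displayed formulas are well posed. Since $\eta_0^\dagger, \theta_0^\dagger \in W_0$ and $\varepsilon > 0$, the right-hand side $\Lap_N \eta_0^\dagger - g(\eta_0^\dagger) - \alpha'(\eta_0^\dagger)\gamma_\varepsilon(\nabla \theta_0^\dagger) + u^\dagger(0)$ lies in $H$ (using $u^\dagger \in W^{1,2}(0,T;H) \hookrightarrow C([0,T];H)$), so Remark~\ref{rem:resolvent} with $\beta \equiv 1$ places $p^\dagger(0)$ in $W_0$. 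Analogously, for $\varepsilon > 0$ the element $\partial \Phi_\kappa^\varepsilon(\alpha(\eta_0^\dagger);\theta_0^\dagger) = -\diver(\alpha(\eta_0^\dagger) \nabla \gamma_\varepsilon(\nabla \theta_0^\dagger) + \kappa \nabla \theta_0^\dagger)$ is single-valued and in $H$; then Remark~\ref{rem:resolvent} with $\beta = \alpha_0(\eta_0^\dagger) \in W^{1,\infty}(\Omega)$ (coming from (A3) combined with the regularity $\eta_0^\dagger \in W_0 \cap H^3$ of (A5)) yields $z^\dagger(0) \in V$.

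\noindent\textbf{Difference quotients and limit passage.} For $h > 0$ small, set $p_h(t) := h^{-1}(\eta^\dagger(t+h)-\eta^\dagger(t))$ and $z_h(t) := h^{-1}(\theta^\dagger(t+h)-\theta^\dagger(t))$ on $[0,T-h]$. Subtracting (S1)$_\varepsilon^{\mu,\nu}$--(S2)$_\varepsilon^{\mu,\nu}$ at times $t+h$ and $t$, dividing by $h$, and invoking the mean value theorem on each of $g, \alpha', \alpha_0, \gamma_\varepsilon$ --- which is licit thanks to (A2)--(A3) and the smoothness of $\gamma_\varepsilon$ for $\varepsilon > 0$ --- produces a linear pseudo-parabolic system for $[p_h, z_h]$ with bounded coefficients and with right-hand sides $h^{-1}(u^\dagger(\cdot+h) - u^\dagger)$, $h^{-1}(v^\dagger(\cdot+h) - v^\dagger)$ that are uniformly bounded in $\sH_T$ by the assumption $u^\dagger, v^\dagger \in W^{1,2}(0,T;H)$. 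Testing with $p_h$ and $z_h$, and then with $-\Lap p_h$ and $-\Lap z_h$, and applying Gronwall, I expect uniform-in-$h$ bounds of $p_h$ in $L^\infty(0,T;V) \cap L^2(0,T;W_0)$ with $\partial_t p_h$ bounded in $L^2(0,T;H)$, and analogous bounds for $z_h$ (where the lower bound $\alpha_0 \geq \delta_\alpha$ and the damping $\nu^2 |\nabla z_h|^2$ are crucial). Standard weak/strong compactness then produces subsequential limits which, by the very definitions of $p_h, z_h$, coincide with $\partial_t \eta^\dagger, \partial_t \theta^\dagger$; coupled with dominated convergence of the mean-value arguments in $g', \alpha'', \nabla \gamma_\varepsilon, \alpha_0'$ (available because $\eta^\dagger(\cdot + h)\to \eta^\dagger$ strongly in $\sV_T$ by Lemma~\ref{pseudo-parabolic}), this gives the variational identities \eqref{time-diff02}--\eqref{time-diff03}.

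\noindent\textbf{Regularity upgrade and main obstacle.} Once \eqref{time-diff02}--\eqref{time-diff03} are in hand, they can be rewritten as elliptic equations for $\partial_t p^\dagger$ and $\partial_t z^\dagger$ of the form $(I - \mu^2 \Lap_N) \partial_t p^\dagger = F_1$ and $(\alpha_0(\eta^\dagger) I - \nu^2 \Lap_N) \partial_t z^\dagger = F_2$, where $F_1, F_2 \in L^2(0,T;H)$ thanks to the already-established bounds on $p^\dagger, z^\dagger$ (in particular $p^\dagger \in L^2(0,T;W_0)$ so $\Lap p^\dagger \in \sH_T$, and $\nabla z^\dagger \in L^2(0,T;[H]^N)$) together with $\partial_t u^\dagger, \partial_t v^\dagger \in \sH_T$. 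Applying Remark~\ref{rem:resolvent} with $\beta \equiv 1$, respectively $\beta = \alpha_0(\eta^\dagger(t))$, to invert these elliptic operators promotes $\partial_t p^\dagger$ into $L^2(0,T;W_0)$ and $\partial_t z^\dagger$ into $L^2(0,T;V)$, yielding the asserted $W^{1,2}$-regularities. The main technical obstacle is the uniform control of $\nabla z_h$ in $L^2_{t,x}$, required to pass to the limit in the coupling term $\alpha'(\eta^\dagger)\nabla\gamma_\varepsilon(\nabla\theta^\dagger)\cdot \nabla z^\dagger$; this is precisely where the strict positivity $\nu^2 > 0$ of the pseudo-parabolic damping and the boundedness of $\nabla^2 \gamma_\varepsilon$ for $\varepsilon > 0$ play their essential roles, explaining why this lemma must be proved at the approximate level before the limits $\mu, \nu, \varepsilon \downarrow 0$ are taken.
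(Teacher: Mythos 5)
Your route is genuinely different from the paper's. You differentiate the system by the translation (difference-quotient) method and pass to the limit $h \downarrow 0$; the paper instead \emph{constructs} the candidate derivative directly, as the unique solution $[p^\ddagger, z^\ddagger]$ of the linear pseudo-parabolic system obtained by formally differentiating (S)$_\varepsilon^{\mu,\nu}$ in time, with right-hand sides $h^\dagger, k^\dagger$ assembled from the already-known $\partial_t \eta^\dagger, \partial_t \theta^\dagger \in L^2(0,T;V)$ and with the displayed initial data \emph{prescribed}; it then checks that $t \mapsto \eta_0^\dagger + \int_0^t p^\ddagger\,ds$ and $t \mapsto \theta_0^\dagger + \int_0^t z^\ddagger\,ds$ solve the (frozen-coefficient) system and identifies $[p^\ddagger, z^\ddagger] = [\partial_t \eta^\dagger, \partial_t \theta^\dagger]$ by uniqueness. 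The paper's construction delegates all regularity of $[p^\dagger, z^\dagger]$ to Showalter--Ting theory for linear pseudo-parabolic equations and obtains the initial condition for free; your argument must earn both through uniform-in-$h$ estimates, and that is where it currently has a hole.

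Concretely: your Gronwall estimate is necessarily run on a quantity of the form $|p_h(t)|_H^2 + \mu^2|\nabla p_h(t)|_{[H]^N}^2 + |\sqrt{\alpha_0(\eta^\dagger)}z_h(t)|_H^2 + \nu^2|\nabla z_h(t)|_{[H]^N}^2$, since only these gradient terms are produced as exact time derivatives by the pseudo-parabolic damping; hence you need this quantity at $t=0$ to be bounded uniformly in $h$. But $p_h(0) = h^{-1}(\eta^\dagger(h) - \eta_0^\dagger)$, and the base regularity $\eta^\dagger \in W^{1,2}(0,T;V)$ only gives $|p_h(0)|_V \leq h^{-1/2}|\partial_t \eta^\dagger|_{L^2(0,h;V)}$, which is not uniformly bounded; the same issue affects $|z_h(0)|_H$ and $\nu|\nabla z_h(0)|_{[H]^N}$. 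Your write-up checks only that the displayed formulas for $p^\dagger(0), z^\dagger(0)$ define elements of $W_0$ and $V$, never that $\partial_t \eta^\dagger, \partial_t \theta^\dagger$ actually attain these values --- which is itself part of the statement to be proved. The repair is to read off from (S1)$_\varepsilon^{\mu,\nu}$ and (S2)$_\varepsilon^{\mu,\nu}$ the resolvent representations $\partial_t \eta^\dagger(t) = (I - \mu^2 \Lap_N)^{-1}\bigl( \Lap_N \eta^\dagger(t) - g(\eta^\dagger(t)) - \alpha'(\eta^\dagger(t))\gamma_\varepsilon(\nabla \theta^\dagger(t)) + u^\dagger(t) \bigr)$ and its analogue for $\partial_t \theta^\dagger$, valid for a.e.\ $t$; since the arguments are continuous in time with values in $H$ (using $\eta^\dagger \in W^{1,2}(0,T;W_0)$, $\theta^\dagger \in C([0,T];V)$ and $u^\dagger, v^\dagger \in C([0,T];H)$ from Lemma~\ref{pseudo-parabolic}), these show that $\partial_t \eta^\dagger$ and $\partial_t \theta^\dagger$ have continuous representatives attaining the stated initial values, whence $p_h(0) \to p^\dagger(0)$ in $V$, $z_h(0) \to z^\dagger(0)$ in $V$, and the Gronwall constant becomes uniform in $h$. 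With that inserted, the rest of your argument (mean-value coefficients converging boundedly a.e.\ against the weak limits $\nabla z_h \rightharpoonup \nabla z^\dagger$, and the final elliptic bootstrap to $W^{1,2}(0,T;W_0) \times W^{1,2}(0,T;V)$) goes through.
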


\begin{proof}
  We set $[p^\ddagger,z^\ddagger]$ as the unique solution to the following pseudo-parabolic linear system:
  \begin{align}
    &\left\{ \begin{aligned}
      &\partial_t p^\ddagger(t) - \Lap_N (p^\ddagger + \mu^2 \partial_t p^\ddagger)(t) = h^\dagger(t), \ \mbox{ in } H, \mbox{ for a.e. } t \in (0,T),
      \\
      &p^\ddagger(0) = (I - \mu^2 \Lap_N)^{-1}\bigl( \Lap_N \eta_0^\dagger - g(\eta_0^\dagger) - \alpha'(\eta_0^\dagger) \gamma_\varepsilon(\nabla \theta_0^\dagger) + u(0) \bigr), \mbox{ and }
    \end{aligned} \right. \label{time-diff04}
    \\
    &\left\{ \begin{aligned}
      &\bigl( \alpha_0(\eta^\dagger)\partial_t z^\ddagger + \alpha_0'(\eta^\dagger) \partial_t \eta^\dagger z^\dagger \bigr)(t) - \Lap_N (\kappa z^\ddagger + \nu^2 \partial_t z^\ddagger)(t) = k^\dagger(t), \ \mbox{ in } V^*, \mbox{ a.e. } t \in (0,T),
      \\
      &z^\ddagger(0) = (\alpha_0(\eta_0^\dagger) I - \nu^2 \Lap_N)^{-1} \bigl( -\partial \Phi_\kappa^\varepsilon(\alpha(\eta_0);\theta_0^\dagger) + v(0) \bigr), \label{time-diff05}
    \end{aligned} \right.
  \end{align}
  with
  \begin{align}
    h^\dagger &:= \partial_t \bigl( - g(\eta^\dagger) - \alpha'(\eta^\dagger) \gamma_\varepsilon(\nabla \theta^\dagger) + u \bigr) \label{time-diff07}
    \\
    &= -g'(\eta^\dagger) \partial_t \eta^\dagger - \alpha''(\eta^\dagger) \partial_t \eta^\dagger \gamma_\varepsilon(\nabla \theta^\dagger) - \alpha'(\eta^\dagger) \nabla \gamma_\varepsilon(\nabla \theta^\dagger) \cdot \nabla \partial_t \theta^\dagger + u' \in \sH_T, \mbox{ and}
    \\
    k^\dagger &:= \partial_t \bigl( \diver \bigl( \alpha(\eta^\dagger) \nabla \gamma_\varepsilon(\nabla \theta^\dagger) \bigr) + v \bigr) \label{time-diff08}
    \\
    &= \diver \bigl( \alpha(\eta^\dagger) \nabla^2 \gamma_\varepsilon(\nabla \theta^\dagger) \nabla \partial_t \theta^\dagger + \alpha'(\eta^\dagger) \partial_t \eta^\dagger \nabla \gamma_\varepsilon(\nabla \theta^\dagger) \bigr) + v' \in \sV_T^*.
  \end{align}
  Then, the theory of pseudo-parabolic linear PDEs (e.g. \cite{MR437936}) allows us to verify the existence of $[p^\dagger, z^\dagger]$ and the regularities \eqref{time-diff01}.

  Now, by setting
  \begin{equation}
    \eta^\ddagger(t) := \eta_0^\dagger + \int_0^t p^\dagger(s) \,ds \mbox{ in } W_0, \mbox{ and } \theta^\ddagger(t) := \theta_0^\dagger + \int_0^t z^\dagger(s) \,ds, \mbox{ in } V,
  \end{equation}
  one can confirm that $[\eta^\ddagger, \theta^\ddagger]$ solves the variational form (S1)$_\varepsilon^{\mu,\nu}$ and (S2)$_\varepsilon^{\mu,\nu}$ of (S)$_\varepsilon^{\mu,\nu}$. Since the solution to (S)$_\varepsilon^{\mu,\nu}$ is unique, we have $[\eta^\ddagger, \theta^\ddagger] = [\eta^\dagger, \theta^\dagger]$. Therefore, we see that $[p^\dagger, z^\dagger] = [p^\ddagger, z^\ddagger]$, and obtain the assertions of Lemma \ref{time-diff}.
\end{proof}
\section{Proofs of Main Theorems}
The proofs of Main Theorems are divided into four subsections, listed below.

\begin{itemize}
  \item[$\S$ 5.1] Boundedness of time-derivatives $\partial_t \eta_{\varepsilon,\mu,\nu}$ and $\partial_t \theta_{\varepsilon,\mu,\nu}$.
  \item[$\S$ 5.2] Limiting observations for some subsequence of $\{ [\eta_{\varepsilon,\mu,\nu}, \theta_{\varepsilon,\mu,\nu}] \}$.
  \item[$\S$ 5.3] Verification of (S0)--(S4).
  \item[$\S$ 5.4] Verification of the uniqueness of solution.
  \item[$\S$ 5.5] The proof of Main Theorem 2.
\end{itemize}

\subsection{Boundedness of time-derivatives $\partial_t \eta_{\varepsilon,\mu,\nu}$ and $\partial_t \theta_{\varepsilon,\mu,\nu}$} \label{sec:TD_bdd}
We begin to set the approximating sequences for the solution to the system (S)$_\varepsilon$ based on Lemma \ref{pseudo-parabolic} and \ref{time-diff}. First, we let $\{ \theta_{0,\varepsilon} \}_{\varepsilon \in (0,1)}$ be given as:
\begin{equation}
  \theta_{0,\varepsilon} := (\partial \Phi_\kappa^\varepsilon(\alpha(\eta_0);\cdot) + I)^{-1} (w^* + \theta_0), \, \mbox{ for some } w^* \in \partial \Phi_\kappa^{\varepsilon_0}(\alpha(\eta_0);\theta_0).
\end{equation}
First, we consider the convergence of sequence $\{ \theta_{0,\varepsilon} \}_{\varepsilon \in (0,1)}$ as $\varepsilon \to \varepsilon_0$.

\begin{lemma} \label{init}
  Under the assumptions (A1)--(A5), letting $\varepsilon \to \varepsilon_0$ implies the following two convergences:
  \begin{equation}
    \left\{ \begin{aligned}
      &\theta_{0,\varepsilon} \to \theta_0 \mbox{ in } V \mbox{ and weakly in } W_0,
      \\
      &w_\varepsilon^* := \partial \Phi_\kappa^\varepsilon(\alpha(\eta_0);\theta) \to w^* \mbox{ weakly in } H,
    \end{aligned} \right. \label{init_00}
  \end{equation}
\end{lemma}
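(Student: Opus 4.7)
The plan is to combine the Mosco-convergence of the interfacial energies from Example \ref{Rem.ExMG}(I) with the uniform $H^2$-estimate of Lemma \ref{lem:SD_est}, and then to bootstrap a naive $H$-convergence all the way to strong convergence in $V$ and weak convergence in $W_0$. First I would note that by definition $\theta_{0,\varepsilon}$ is characterised by
\begin{equation}
w_\varepsilon^* + \theta_{0,\varepsilon} = w^* + \theta_0 \text{ in } H, \qquad w_\varepsilon^* \in \partial \Phi_\kappa^\varepsilon(\alpha(\eta_0); \theta_{0,\varepsilon}),
\end{equation}
while the hypothesis $w^* \in \partial \Phi_\kappa^{\varepsilon_0}(\alpha(\eta_0);\theta_0)$ says exactly that $\theta_0 = (\partial \Phi_\kappa^{\varepsilon_0}(\alpha(\eta_0);\cdot) + I)^{-1}(w^* + \theta_0)$.

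Next, Example \ref{Rem.ExMG}(I) yields the Mosco-convergence $\Phi_\kappa^\varepsilon(\alpha(\eta_0);\cdot) \to \Phi_\kappa^{\varepsilon_0}(\alpha(\eta_0);\cdot)$ on $H$ as $\varepsilon \to \varepsilon_0$. The classical Brezis--Attouch theorem on strong convergence of resolvents under Mosco-convergence then gives $\theta_{0,\varepsilon} \to \theta_0$ strongly in $H$; consequently $w_\varepsilon^* = w^* + \theta_0 - \theta_{0,\varepsilon} \to w^*$ strongly in $H$, which in particular delivers the weak $H$-convergence asserted in the Lemma. Remark \ref{Rem.MG} simultaneously supplies the energy convergence
\begin{equation}
\Phi_\kappa^\varepsilon(\alpha(\eta_0);\theta_{0,\varepsilon}) \to \Phi_\kappa^{\varepsilon_0}(\alpha(\eta_0);\theta_0) \quad \text{as } \varepsilon \to \varepsilon_0.
\end{equation}

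From this energy convergence the sequence $\{ \nabla \theta_{0,\varepsilon} \}$ is bounded in $[H]^N$, so together with the $H$-convergence we have $\theta_{0,\varepsilon} \to \theta_0$ weakly in $V$. Applying Lemma \ref{lem:SD_est} to the elliptic problem $\partial \Phi_\kappa^\varepsilon(\alpha(\eta_0);\theta_{0,\varepsilon}) + \theta_{0,\varepsilon} \ni w^* + \theta_0$ produces a uniform bound
\begin{equation}
|\theta_{0,\varepsilon}|_{H^2(\Omega)}^2 \le C_\kappa^* \bigl( |w^* + \theta_0|_H^2 + |\alpha(\eta_0)|_V^2 \bigr)
\end{equation}
independent of $\varepsilon$. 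Since each $\theta_{0,\varepsilon}$ lies in $W_0$ and the trace condition $\nabla z \cdot n_\Gamma = 0$ is preserved by weak $H^2$-limits, extraction of a subsequence yields weak $W_0$-convergence to $\theta_0$, and uniqueness of the limit extends this to the whole net.

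It remains to upgrade $V$-weak to $V$-strong convergence. I would split
\begin{equation}
\Phi_\kappa^\varepsilon(\alpha(\eta_0);\theta_{0,\varepsilon}) = \int_\Omega \alpha(\eta_0)\, \gamma_\varepsilon(\nabla \theta_{0,\varepsilon})\,dx + \frac{\kappa}{2}|\nabla \theta_{0,\varepsilon}|_{[H]^N}^2,
\end{equation}
and exploit that both summands are separately lower semicontinuous with respect to weak $V$-convergence: the first because $\gamma_\varepsilon \to \gamma_{\varepsilon_0}$ in the Mosco sense on $\R^N$ (Example \ref{Rem.ExMG}(O)), the second as a squared norm. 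Since a sum of two lower semicontinuous quantities can converge to the sum of their limits only if each individually attains its $\liminf$, it follows in particular that $|\nabla \theta_{0,\varepsilon}|_{[H]^N}^2 \to |\nabla \theta_0|_{[H]^N}^2$, which combined with weak $V$-convergence yields $\theta_{0,\varepsilon} \to \theta_0$ strongly in $V$. The main technical point is this splitting-plus-semicontinuity argument for the $V$-strong upgrade; the remaining steps amount to bookkeeping around the resolvent convergence theorem and the $H^2$-estimate of Lemma \ref{lem:SD_est}.
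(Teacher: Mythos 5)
Your argument is correct, but it reaches the conclusion by a genuinely different route from the paper's. The paper's proof works entirely by compactness and a posteriori identification of the limit: the uniform $H^2$-bound of Lemma \ref{lem:SD_est} plus the Rellich--Kondrachov theorem produce a subsequence with $\theta_{0,\varepsilon_n} \to \tilde\theta$ strongly in $V$ and weakly in $W_0$, the inclusion $w_{\varepsilon_n}^* \in \partial\Phi_\kappa^{\varepsilon_n}(\alpha(\eta_0);\theta_{0,\varepsilon_n})$ is passed to the limit via Remark \ref{Rem.MG}, and $\tilde\theta = \theta_0$ then follows from uniqueness for the limiting elliptic equation \eqref{init_06}; the strong $V$-convergence is thus a free by-product of the compact embedding $W_0 \hookrightarrow V$. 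You instead identify the limit a priori through the classical equivalence of Mosco convergence with strong resolvent convergence (available in the cited \cite{MR0773850}), which gives $\theta_{0,\varepsilon} \to \theta_0$ in $H$ and $w_\varepsilon^* \to w^*$ in $H$ without any compactness, and you then recover strong $V$-convergence from the energy convergence of Remark \ref{Rem.MG} by splitting $\Phi_\kappa^\varepsilon(\alpha(\eta_0);\cdot)$ into two separately weakly lower semicontinuous summands and using uniform convexity of $[H]^N$. Your route buys a sharper structural insight (the strong $V$-convergence is driven by energy convergence, not by the $H^2$-estimate, which you only need for the weak $W_0$-statement), at the cost of the extra splitting argument; the paper's route is shorter once the $H^2$-bound is in hand. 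One small presentational point: the lower semicontinuity of the first summand $\int_\Omega \alpha(\eta_0)\gamma_\varepsilon(\nabla\theta_{0,\varepsilon})\,dx$ under the joint convergence $\varepsilon \to \varepsilon_0$, $\theta_{0,\varepsilon} \rightharpoonup \theta_0$ in $V$ is better justified by the uniform estimate $|\gamma_\varepsilon(y)-\gamma_{\varepsilon_0}(y)| \leq |\varepsilon - \varepsilon_0|$ for all $y \in \R^N$ together with the weak lower semicontinuity of the fixed convex, $V$-continuous functional $w \mapsto \int_\Omega\alpha(\eta_0)\gamma_{\varepsilon_0}(\nabla w)\,dx$, rather than by appealing to Mosco convergence of $\gamma_\varepsilon$ on $\R^N$ alone; this is a clarification, not a gap.
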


\begin{proof}
  Owing to the estimate \eqref{SD_est}, we can obtain the following estimate:
  \begin{gather}
    |\theta_{0,\varepsilon}|_{H^2(\Omega)}^2 = |(\partial \Phi_\kappa^\varepsilon(\alpha(\eta_0);\cdot) + I)^{-1} (w^* + \theta_0)|_{H^2(\Omega)}^2 \leq C_\kappa^* (|w^* + \theta_0|_H^2 + |\alpha(\eta_0)|_V^2),
    \\
    \mbox{ for any } \varepsilon \in (0,1), \label{init_001}
  \end{gather}
  where $C_\kappa^*$ is the constant as in \eqref{SD_est}. Therefore, we can obtain the following boundedness:
  \begin{itemize}
    \item $\{ \theta_{0,\varepsilon} \}_{\varepsilon \in (0,1)}$ is bounded in $W_0$.
  \end{itemize}
  By applying the Rellich--Kondrachov theorem, we can find a subsequence $\{ \varepsilon_n \}_{n = 1}^\infty \subset (0,\infty)$, convergent to $\varepsilon_0$, with a limiting function $\tilde \theta \in W_0$ such that:
  \begin{equation}
    \theta_{0,\varepsilon_n} \to \tilde \theta \mbox{ in } V \mbox{ and weakly in } W_0 \mbox{ as } n \to \infty. \label{init_04}
  \end{equation}
  Moreover, noting the following identity:
  \begin{equation}
    \partial \Phi_\kappa^\varepsilon(\alpha(\eta_0);\theta_{0,\varepsilon}) = -\theta_{0,\varepsilon} + w^* + \theta_0 \mbox{ in } H,
  \end{equation}
  we obtain an additional convergence:
  \begin{equation}
    w_{\varepsilon_n}^* \to \tilde w := -\tilde \theta + w^* + \theta_0 \mbox{ in } H \mbox{ as } n \to \infty. \label{init_0011}
  \end{equation}

  Now, $\tilde \theta$ and $\tilde w$ satisfy the following identity:
  \begin{equation}
    \tilde w + \tilde \theta = w^* + \theta_0 \mbox{ in } H. \label{init_05}
  \end{equation}
  In addition, having Remark \ref{Rem.MG} and Example \ref{Rem.ExMG} (I) in mind, one can find that:
  \begin{equation}
    \tilde w \in \partial \Phi_\kappa^{\varepsilon_0}(\alpha(\eta_0);\tilde \theta) \mbox{ in } H.
  \end{equation}
  which implies that $\tilde \theta$ solves the following elliptic equation:
  \begin{equation}
    \bigl( \partial \Phi_\kappa^{\varepsilon_0}(\alpha(\eta_0); \cdot) + I \bigr) \tilde \theta \ni w^* + \theta_0 \mbox{ in } H. \label{init_06}
  \end{equation}
  Here, one can say that $\theta_0$ also solves the elliptic equation \eqref{init_06}. Hence, the uniqueness of solution to \eqref{init_06} leads that $\tilde \theta = \theta_0$ in $H$, and taking \eqref{init_05} into account, we have $\tilde w = w^*$. 

  Finally, by the uniqueness of limit, the convergence \eqref{init_00} and \eqref{init_001} can be verified. Thus, the proof of Lemma \ref{init} is completed. 

\end{proof}

Let $[\eta_{\varepsilon,\mu,\nu},\theta_{\varepsilon,\mu,\nu}]$ be the solution to (S)$_\varepsilon^{\mu,\nu}$ for $\varepsilon \in (0,1)$, $\mu \in (0,1)$ and $\nu \in (0,1)$, under the case when:
\begin{equation}
  [\eta_0^\dagger, \theta_0^\dagger] = [\eta_0, \theta_{0,\varepsilon}], \ \mbox{ and } [u^\dagger, v^\dagger] = [u,v].
\end{equation}

Next, we consider to derive some estimates for the time derivative of solution. In view of this, we set $[p_{\varepsilon,\mu,\nu},z_{\varepsilon,\mu,\nu}] := [\partial_t \eta_{\varepsilon,\mu,\nu}, \partial_t \theta_{\varepsilon,\mu,\nu}]$. In addition, for some positive constant $\sigma > 0$, we set $[\eta_{\varepsilon,\mu,\nu}^\circ,\theta_{\varepsilon,\mu,\nu}^\circ]$ by scale transformation in time, as follows:
\begin{equation}
  [\eta_{\varepsilon,\mu,\nu}^\circ,\theta_{\varepsilon,\mu,\nu}^\circ](t) := [\eta_{\varepsilon,\mu,\nu},\theta_{\varepsilon,\mu,\nu}](t/\sigma), \ \mbox{ and } [u^\circ, v^\circ] := [u,v](t/\sigma), \mbox{ for } t \in [0,\sigma T].
\end{equation}
Then, $[\eta_{\varepsilon,\mu,\nu}^\circ,\theta_{\varepsilon,\mu,\nu}^\circ]$ and its time derivative $[p_{\varepsilon,\mu,\nu}^\circ,z_{\varepsilon,\mu,\nu}^\circ] := [\partial_t \eta_{\varepsilon,\mu,\nu}^\circ,\partial_t \theta_{\varepsilon,\mu,\nu}^\circ]$ solve the following variational systems:
\begin{equation}
  [\eta_{\varepsilon,\mu,\nu}^\circ,\theta_{\varepsilon,\mu,\nu}^\circ] \in [W^{2,2}(0,\sigma T;V) \cap W^{1,2}(0,\sigma T;W_0)] \times [W^{2,2}(0,\sigma T;V) \times L^\infty(0,\sigma T;H^2(\Omega))],
\end{equation}
\begin{equation}
  \left\{ \begin{gathered}
    \begin{gathered}
      \bigl( \sigma \partial_t \eta_{\varepsilon,\mu,\nu}^\circ(t) + g(\eta_{\varepsilon,\mu,\nu}^\circ(t)) + \alpha'(\eta_{\varepsilon,\mu,\nu}^\circ(t))\gamma_\varepsilon(\nabla \theta_{\varepsilon,\mu,\nu}^\circ(t)), \varphi \bigr)_H + (\nabla \eta_{\varepsilon,\mu,\nu}^\circ(t), \nabla \varphi)_{[H]^N}
      \\
      + \sigma \mu^2(\nabla \partial_t \eta_{\varepsilon,\mu,\nu}^\circ(t), \nabla \varphi)_{{[H]^N}} = (u^\circ(t), \varphi)_H, \mbox{ for any } \varphi \in V, \mbox{ and a.e. } t \in (0,\sigma T).
    \end{gathered}
    \\[2ex]
    \begin{gathered}
        \bigl(\sigma (\alpha_0(\eta_{\varepsilon,\mu,\nu}^\circ) \partial_t \theta_{\varepsilon,\mu,\nu}^\circ)(t), \theta_{\varepsilon,\mu,\nu}^\circ(t) - \psi\bigr)_H + \bigl( \alpha(\eta_{\varepsilon,\mu,\nu}^\circ(t)) \nabla\gamma_\varepsilon(\nabla \theta_{\varepsilon,\mu,\nu}^\circ(t)) + \kappa \nabla \theta_{\varepsilon,\mu,\nu}^\circ, \nabla \psi\bigr)_H
        \\
        + \bigl( \sigma \nu^2 \nabla \partial_t \theta_{\varepsilon,\mu,\nu}^\circ(t), \nabla \psi \bigr)_{[H]^N}= (v^\circ(t), \psi)_H, \mbox{ for any } \psi \in V, \mbox{ and a.e. }t \in (0,\sigma T).
      \end{gathered}
  \end{gathered} \right. \label{ST_01}
\end{equation}
and
\begin{equation}
  \left\{ \begin{gathered}
    \begin{gathered}
      \bigl((\sigma \partial_t p_{\varepsilon,\mu,\nu}^\circ + g'(\eta_{\varepsilon,\mu,\nu}^\circ)p_{\varepsilon,\mu,\nu}^\circ + \alpha''(\eta_{\varepsilon,\mu,\nu}^\circ) \gamma_\varepsilon(\nabla \theta_{\varepsilon,\mu,\nu}^\circ))(t), \varphi\bigr)_H 
      \\
      + ((\alpha'(\eta_{\varepsilon,\mu,\nu}^\circ) \nabla \gamma_\varepsilon(\nabla \theta_{\varepsilon,\mu,\nu}^\circ) \cdot \nabla z_{\varepsilon,\mu,\nu}^\circ)(t), \varphi)_H + (\nabla (p_{\varepsilon,\mu,\nu}^\circ + \sigma \mu^2 \partial_t p_{\varepsilon,\mu,\nu}^\circ)(t), \nabla \varphi)_H 
      \\
      = (\partial_t u^\circ(t), \varphi)_H, \ \mbox{ for any } \varphi \in V, \ \mbox{ and a.e. } t \in (0,\sigma T),
    \end{gathered}
    \\[2ex]
    \begin{gathered}
      \bigl( \sigma (\alpha_0(\eta_{\varepsilon,\mu,\nu}^\circ) \partial_t z_{\varepsilon,\mu,\nu}^\circ + \alpha_0'(\eta_{\varepsilon,\mu,\nu}^\circ) p_{\varepsilon,\mu,\nu}^\circ z_{\varepsilon,\mu,\nu}^\circ)(t), \psi \bigr)_H 
      \\
      + \bigl( (\alpha(\eta_{\varepsilon,\mu,\nu}^\circ) \nabla^2 \gamma_\varepsilon(\nabla \theta_{\varepsilon,\mu,\nu}^\circ) \nabla z_{\varepsilon,\mu,\nu}^\circ + \kappa \nabla z_{\varepsilon,\mu,\nu}^\circ + \sigma \nu^2 \nabla \partial_t z_{\varepsilon,\mu,\nu}^\circ)(t), \nabla \psi \bigr)_{[H]^N} 
      \\
      + \bigl( (\alpha'(\eta_{\varepsilon,\mu,\nu}^\circ ) p_{\varepsilon,\mu,\nu}^\circ \nabla \gamma_\varepsilon(\nabla \theta_{\varepsilon,\mu,\nu}^\circ))(t), \nabla \psi \bigr)_{[H]^N} = (\partial_t v^\circ(t), \psi)_H,
      \\
      \mbox{ for any } \psi \in V, \ \mbox{ and a.e. } t \in (0,\sigma T),
    \end{gathered}
  \end{gathered} \right. \label{ST_02}
\end{equation}
  with initial conditions:
  \begin{gather}
    [\eta_{\varepsilon,\mu,\nu}^\circ(0),\theta_{\varepsilon,\mu,\nu}^\circ(0)] := [\eta_0, \theta_{0,\varepsilon}] \mbox{ in } [W_0]^2, \mbox{ and} \label{ST_03}
    \\
    \left\{ \begin{aligned}
      p_{\varepsilon,\mu,\nu}^\circ(0) &= \sigma^{-1}(I - \mu^2 \Lap_N)^{-1}\bigl( \Lap_N \eta_0 - g(\eta_0) - \alpha'(\eta_0) \gamma_\varepsilon(\nabla \theta_{0,\varepsilon}) + u(0) \bigr) \in W_0,
      \\
      z_{\varepsilon,\mu,\nu}^\circ(0) &= \sigma^{-1}(\alpha_0(\eta_0) I - \nu^2 \Lap_N)^{-1} \bigl( -w_\varepsilon^* + v(0) \bigr)
      \\
      &= \sigma^{-1}(\alpha_0(\eta_0) I - \nu^2 \Lap_N)^{-1} \bigl( \diver \bigl( \alpha(\eta_0) \nabla \gamma_\varepsilon(\nabla \theta_{0,\varepsilon}) + \kappa \nabla \theta_{0,\varepsilon} \bigr) + v(0) \bigr) \in V.
    \end{aligned} \right.
  \end{gather}
  
  Now, we are ready to find the estimates associated with pseudo-parabolic type regularities to $[\eta_{\varepsilon,\mu,\nu},\theta_{\varepsilon,\mu,\nu}]$.
\begin{lemma}[Key estimates]\label{key_est}
  We set a positive constant $\sigma_* > 1$ as:
  \begin{equation}
    \sigma_* := \frac{8}{\kappa}|\alpha'|_{L^\infty}^2, \mbox{ so that } \ \frac{\sigma}{2} - \frac{2}{\kappa}|\alpha'|_{L^\infty}^2 \geq \frac{\sigma}{4} \mbox{ for } \sigma \geq \sigma_*.
  \end{equation}
  Then, there exists a positive constant $C_5 > 0$, independent of $\varepsilon, \mu, \nu$, such that the following estimate holds:
  \begin{align}
    &\quad J_{\varepsilon,\mu,\nu}(t) +\int_0^t \Bigl( |\nabla p_{\varepsilon,\mu,\nu}^\circ(t)|_{[H]^N}^2 + \kappa |\nabla z_{\varepsilon,\mu,\nu}^\circ(t)|_{[H]^N}^2 + \frac{\sigma}{2}|\partial_t p_{\varepsilon,\mu,\nu}^\circ(t)|_H^2
    \\
    &\quad\qquad+ \sigma \mu^2 |\nabla \partial_t p_{\varepsilon,\mu,\nu}^\circ(t)|_{[H]^N}^2 \Bigr) \,dt \label{ST_04}
    \\
    &\leq \frac{12}{\delta_\alpha \wedge 1} \exp\bigl( C_5 \bigl( |\theta_{\varepsilon,\mu,\nu}^\circ|_{L^2(0,\sigma T;H^2(\Omega))}^2 + |\partial_t \theta_{\varepsilon,\mu,\nu}^\circ|_{\sH_{\sigma T}}^2 + (\varepsilon^2 + 1) \sigma T \bigr)\bigr) \cdot 
    \\
    &\quad\qquad \cdot \bigl( J_{\varepsilon,\mu,\nu}(0) + |\partial_t u^\circ|_{\sH_{\sigma T}}^2 + |\partial_t v^\circ|_{\sH_{\sigma T}}^2 \bigr), \mbox{ for every } t \in [0,\sigma T], 
  \end{align}
  where
  \begin{gather}
    J_{\varepsilon,\mu,\nu}(t) := \sigma |p_{\varepsilon,\mu,\nu}^\circ(t)|_H^2 + (1 + \sigma \mu^2)|\nabla p_{\varepsilon,\mu,\nu}^\circ(t)|_{[H]^N}^2 + \sigma \bigl|\sqrt{\alpha_0(\eta_{\varepsilon,\mu,\nu}^\circ)(t)} z_{\varepsilon,\mu,\nu}^\circ(t) \bigr|_H^2
    \\
    + \sigma \nu^2 |\nabla z_{\varepsilon,\mu,\nu}^\circ(t)|_{[H]^N}^2, \mbox{ for } t \in [0,\sigma T], \label{ST_05}
  \end{gather}
\end{lemma}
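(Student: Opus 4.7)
The plan is to derive a differential inequality of the form $\tfrac{d}{dt}J(t) + (\text{retained dissipation}) \leq f(t)J(t) + g(t)$ and close via Gronwall's lemma. Concretely, I would test the first equation of \eqref{ST_02} against $p_{\varepsilon,\mu,\nu}^\circ$ and against $\partial_t p_{\varepsilon,\mu,\nu}^\circ$, and the second equation against $z_{\varepsilon,\mu,\nu}^\circ$, then add the three resulting identities. The coefficient $(1+\sigma\mu^2)$ in front of $|\nabla p_{\varepsilon,\mu,\nu}^\circ|_{[H]^N}^2$ in $J$ emerges from combining $\tfrac{1}{2}\tfrac{d}{dt}|\nabla p_{\varepsilon,\mu,\nu}^\circ|_{[H]^N}^2$ (from testing against $\partial_t p_{\varepsilon,\mu,\nu}^\circ$) with $\tfrac{\sigma\mu^2}{2}\tfrac{d}{dt}|\nabla p_{\varepsilon,\mu,\nu}^\circ|_{[H]^N}^2$ (from testing against $p_{\varepsilon,\mu,\nu}^\circ$); likewise the chain-rule identity $\sigma(\alpha_0(\eta_{\varepsilon,\mu,\nu}^\circ)\partial_t z_{\varepsilon,\mu,\nu}^\circ, z_{\varepsilon,\mu,\nu}^\circ)_H = \tfrac{\sigma}{2}\tfrac{d}{dt}|\sqrt{\alpha_0(\eta_{\varepsilon,\mu,\nu}^\circ)}z_{\varepsilon,\mu,\nu}^\circ|_H^2 - \tfrac{\sigma}{2}(\alpha_0'(\eta_{\varepsilon,\mu,\nu}^\circ)p_{\varepsilon,\mu,\nu}^\circ z_{\varepsilon,\mu,\nu}^\circ, z_{\varepsilon,\mu,\nu}^\circ)_H$ produces the weighted $L^2$-contribution in $J$. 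The term $(\alpha(\eta_{\varepsilon,\mu,\nu}^\circ)\nabla^2\gamma_\varepsilon(\nabla\theta_{\varepsilon,\mu,\nu}^\circ)\nabla z_{\varepsilon,\mu,\nu}^\circ, \nabla z_{\varepsilon,\mu,\nu}^\circ)_{[H]^N}$ is non-negative by convexity of $\gamma_\varepsilon$ and may be discarded, leaving $|\nabla p_{\varepsilon,\mu,\nu}^\circ|_{[H]^N}^2 + \sigma|\partial_t p_{\varepsilon,\mu,\nu}^\circ|_H^2 + \sigma\mu^2|\nabla\partial_t p_{\varepsilon,\mu,\nu}^\circ|_{[H]^N}^2 + \kappa|\nabla z_{\varepsilon,\mu,\nu}^\circ|_{[H]^N}^2$ as retained dissipation.

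Next I would bound the remaining cross terms using $|\nabla\gamma_\varepsilon|\leq 1$ (from \eqref{exM01}), the pointwise bound $\gamma_\varepsilon(y)\leq \varepsilon + |y|$, the Sobolev embedding $V \hookrightarrow L^4(\Omega)$ (valid since $N\leq 4$), and Young's inequality. The prescribed lower bound $\sigma\geq\sigma_*$ enters precisely in the treatment of $(\alpha'(\eta_{\varepsilon,\mu,\nu}^\circ)\nabla\gamma_\varepsilon(\nabla\theta_{\varepsilon,\mu,\nu}^\circ)\cdot \nabla z_{\varepsilon,\mu,\nu}^\circ, \partial_t p_{\varepsilon,\mu,\nu}^\circ)_H$: Young's with weight $\sigma/8$ on $|\partial_t p_{\varepsilon,\mu,\nu}^\circ|_H^2$ leaves the residual $\tfrac{2|\alpha'|_{L^\infty}^2}{\sigma}|\nabla z_{\varepsilon,\mu,\nu}^\circ|_{[H]^N}^2 \leq \tfrac{\kappa}{4}|\nabla z_{\varepsilon,\mu,\nu}^\circ|_{[H]^N}^2$, absorbable into the $\kappa|\nabla z_{\varepsilon,\mu,\nu}^\circ|_{[H]^N}^2$ on the left. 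After all absorptions, the inequality takes the form $\tfrac{d}{dt}J(t) + (\text{retained dissipation}) \leq f(t)J(t) + g(t)$, with $g(t) \lesssim |\partial_t u^\circ(t)|_H^2 + |\partial_t v^\circ(t)|_H^2$, and $f(t)$ a linear combination of $|\theta_{\varepsilon,\mu,\nu}^\circ(t)|_{H^2(\Omega)}^2$, $|z_{\varepsilon,\mu,\nu}^\circ(t)|_H^2$, and the constant $\varepsilon^2+1$. Crucially, the lower bound $\alpha_0\geq \delta_\alpha$ gives $|z_{\varepsilon,\mu,\nu}^\circ(t)|_H^2 \leq J(t)/(\sigma\delta_\alpha)$, so terms of this form legitimately serve as a Gronwall coefficient; integrating $f$ over $[0,\sigma T]$ produces exactly the claimed exponent, and Gronwall's lemma then yields \eqref{ST_04}.

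The main technical obstacle is the bookkeeping for the triple-product cross terms, most notably $(\alpha''(\eta_{\varepsilon,\mu,\nu}^\circ)\gamma_\varepsilon(\nabla\theta_{\varepsilon,\mu,\nu}^\circ)p_{\varepsilon,\mu,\nu}^\circ, p_{\varepsilon,\mu,\nu}^\circ)_H$, its counterpart tested with $\partial_t p_{\varepsilon,\mu,\nu}^\circ$, and $\tfrac{\sigma}{2}(\alpha_0'(\eta_{\varepsilon,\mu,\nu}^\circ) p_{\varepsilon,\mu,\nu}^\circ z_{\varepsilon,\mu,\nu}^\circ, z_{\varepsilon,\mu,\nu}^\circ)_H$. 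Each demands an $L^4$-$L^4$-$L^2$ H\"{o}lder split, followed by $|\cdot|_{L^4(\Omega)}^2 \leq (C_V^{L^4})^2(|\cdot|_H^2 + |\nabla\cdot|_{[H]^N}^2)$ and a further Young's inequality that routes the $|\nabla z_{\varepsilon,\mu,\nu}^\circ|_{[H]^N}^2$ fragment into absorbable dissipation and the complementary fragment (either $|z_{\varepsilon,\mu,\nu}^\circ|_H^2$, linearly dominated by $J$, or $|\theta_{\varepsilon,\mu,\nu}^\circ|_{H^2(\Omega)}^2$, in $L^1(t)$ by Lemma \ref{lem:H2}) into the Gronwall coefficient. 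Ensuring that no residual $|\nabla z_{\varepsilon,\mu,\nu}^\circ|_{[H]^N}^2$ contribution escapes into $f(t)$ — since $\int_0^{\sigma T}|\nabla z_{\varepsilon,\mu,\nu}^\circ|_{[H]^N}^2\,dt$ is not uniformly bounded as $\nu\downarrow 0$ — is the delicate point at the heart of the argument, and the calibration $\sigma\geq \sigma_*$ together with the dissipation $\kappa|\nabla z_{\varepsilon,\mu,\nu}^\circ|_{[H]^N}^2$ is what makes it succeed.
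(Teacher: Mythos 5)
Your proposal is correct and follows essentially the same route as the paper: testing \eqref{ST_02} with $p_{\varepsilon,\mu,\nu}^\circ$, $z_{\varepsilon,\mu,\nu}^\circ$, and $\partial_t p_{\varepsilon,\mu,\nu}^\circ$, discarding the nonnegative $\nabla^2\gamma_\varepsilon$ term, using the chain-rule identity for $\alpha_0$ together with the $V\hookrightarrow L^4(\Omega)$ embedding, and closing with Gronwall against the coefficient $R_{\theta,\varepsilon}(t)=|\theta_{\varepsilon,\mu,\nu}^\circ(t)|_{H^2(\Omega)}^2+|\partial_t\theta_{\varepsilon,\mu,\nu}^\circ(t)|_H^2+\varepsilon^2+1$. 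The only (immaterial) deviation is where the calibration $\sigma\geq\sigma_*$ is spent --- you absorb the residual of the cross term $(\alpha'\nabla\gamma_\varepsilon(\nabla\theta^\circ)\cdot\nabla z^\circ,\partial_t p^\circ)_H$ into the $\kappa|\nabla z^\circ|^2$ dissipation, while the paper absorbs it into the $\sigma|\partial_t p^\circ|_H^2$ dissipation --- and your closing remark that no $|\nabla z^\circ|^2$ may leak into the Gronwall coefficient correctly identifies the delicate point.
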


\begin{proof}
  We let $\varphi = p_{\varepsilon,\mu,\nu}^\circ(t)$ in \eqref{ST_02}. By H\"{o}lder's and Young's inequalities, one can compute that
  \begin{gather}
    \frac{\sigma}{2} \frac{d}{dt}\bigl( |p_{\varepsilon,\mu,\nu}^\circ(t)|_H^2 + \mu^2 |\nabla p_{\varepsilon,\mu,\nu}^\circ(t)|_{[H]^N}^2 \bigr) + |\nabla p_{\varepsilon,\mu,\nu}^\circ(t)|_{[H]^N}^2 - |g'|_{L^\infty} |p_{\varepsilon,\mu,\nu}^\circ(t)|_H^2
    \\
    - \frac{\kappa}{8}|\nabla z_{\varepsilon,\mu,\nu}^\circ(t)|_{[H]^N}^2 - \frac{2}{\kappa}|\alpha'|_{L^\infty}^2 |p_{\varepsilon,\mu,\nu}^\circ(t)|_H^2 \leq \frac{1}{\sigma}|\partial_t u^\circ(t)|_H^2 + \frac{\sigma}{4}|p_{\varepsilon,\mu,\nu}^\circ(t)|_H^2, \label{ST_07}
    \\
    \mbox{ for a.e. } t \in (0,\sigma T). 
  \end{gather}

  Next, we consider letting $\psi = z_{\varepsilon,\mu,\nu}^\circ(t)$. Then, invoking the positivity of $\nabla^2 \gamma_\varepsilon$, we can estimate that
  \begin{align}
    &\sigma ((\alpha_0(\eta_{\varepsilon,\mu,\nu}^\circ) \partial_t z_{\varepsilon,\mu,\nu}^\circ + \alpha_0'(\eta_{\varepsilon,\mu,\nu}^\circ) p_{\varepsilon,\mu,\nu}^\circ z_{\varepsilon,\mu,\nu}^\circ)(t), z_{\varepsilon,\mu,\nu}^\circ(t))_H + \kappa |\nabla z_{\varepsilon,\mu,\nu}^\circ(t)|_{[H]^N}^2 
    \\
    &\quad + \frac{\sigma \nu^2}{2}\frac{d}{dt} |\nabla z_{\varepsilon,\mu,\nu}^\circ(t)|_{[H]^N}^2 - \frac{\kappa}{8}|\nabla z_{\varepsilon,\mu,\nu}^\circ(t)|_{[H]^N}^2 - \frac{2}{\kappa}|\alpha'|_{L^\infty}^2|p_{\varepsilon,\mu,\nu}^\circ(t)|_H^2 \label{ST_08}
    \\
    &\quad \leq \frac{1}{\sigma\delta_\alpha} |\partial_t u^\circ(t)|_H^2 + \frac{\sigma}{4}|\sqrt{\alpha_0(\eta_{\varepsilon,\mu,\nu}^\circ(t))} z_{\varepsilon,\mu,\nu}^\circ(t)|_H^2, \ \mbox{ for a.e. } t \in (0,\sigma T).
  \end{align}
  The first term of \eqref{ST_08} can be computed by:
  \begin{align}
    &\quad \sigma ((\alpha_0(\eta_{\varepsilon,\mu,\nu}^\circ) \partial_t z_{\varepsilon,\mu,\nu}^\circ + \alpha_0'(\eta_{\varepsilon,\mu,\nu}^\circ) p_{\varepsilon,\mu,\nu}^\circ z_{\varepsilon,\mu,\nu}^\circ)(t), z_{\varepsilon,\mu,\nu}^\circ(t))_H
    \\
    &= \frac{\sigma}{2}\frac{d}{dt} \bigl|\sqrt{\alpha_0(\eta_{\varepsilon,\mu,\nu}^\circ(t))} z_{\varepsilon,\mu,\nu}^\circ(t) \bigr|_H^2 - \frac{\sigma}{2}((\alpha_0'(\eta_{\varepsilon,\mu,\nu}^\circ) p_{\varepsilon,\mu,\nu}^\circ z_{\varepsilon,\mu,\nu}^\circ)(t), z_{\varepsilon,\mu,\nu}^\circ(t))_H
    \\
    &\quad\quad + \sigma ((\alpha_0'(\eta_{\varepsilon,\mu,\nu}^\circ) p_{\varepsilon,\mu,\nu}^\circ z_{\varepsilon,\mu,\nu}^\circ)(t), z_{\varepsilon,\mu,\nu}^\circ(t))_H \label{ST_09}
    \\
    &\geq \frac{\sigma}{2}\frac{d}{dt} \bigl|\sqrt{\alpha_0(\eta_{\varepsilon,\mu,\nu}^\circ(t))} z_{\varepsilon,\mu,\nu}^\circ(t) \bigr|_H^2 - \frac{\kappa}{8}|z_{\varepsilon,\mu,\nu}^\circ(t)|_V^2 - \frac{(C_V^{L^4})^4 \sigma^2}{2 \kappa}|\alpha_0'|_{L^\infty}^2 |z_{\varepsilon,\mu,\nu}^\circ(t)|_H^2 |p_{\varepsilon,\mu,\nu}^\circ(t)|_V^2.
  \end{align}
  On account of \eqref{ST_09}, \eqref{ST_08} can be reduced as follows:
  \begin{align}
    &\frac{\sigma}{2}\frac{d}{dt} \bigl( \bigl|\sqrt{\alpha_0(\eta_{\varepsilon,\mu,\nu}^\circ(t))}z_{\varepsilon,\mu,\nu}^\circ(t)\bigr|_H^2 + \nu^2 |\nabla z_{\varepsilon,\mu,\nu}^\circ(t)|_{[H]^N}^2\bigr) + \frac{3\kappa}{4}|\nabla z_{\varepsilon,\mu,\nu}^\circ(t)|_{[H]^N}^2
    \\
    &\quad \leq \frac{\kappa}{8}|z_{\varepsilon,\mu,\nu}^\circ(t)|_H^2 + \frac{2}{\kappa} |\alpha_0'|_{L^\infty}^2 + \frac{(C_V^{L^4})^4 \sigma^2}{2 \kappa} |z_{\varepsilon,\mu,\nu}^\circ(t)|_H^2 |p_{\varepsilon,\mu,\nu}^\circ(t)|_V^2 \label{ST_10}
    \\
    &\quad\quad + \frac{1}{\sigma \delta_\alpha} |\partial_t u^\circ(t)|_H^2 + \frac{\sigma}{4}|\sqrt{\alpha_0(\eta_{\varepsilon,\mu,\nu}^\circ(t))} z_{\varepsilon,\mu,\nu}^\circ(t)|_H^2, \ \mbox{ for a.e. } t \in (0,\sigma T).
  \end{align} 

  Finally, let us set $\varphi = \partial_t p_{\varepsilon,\mu,\nu}^\circ(t)$ in \eqref{ST_02}. Using H\"{o}lder's and Young's inequalities, we find the following inequality:
  \begin{align}
    &\sigma |\partial_t p_{\varepsilon,\mu,\nu}^\circ(t)|_H^2 + \sigma \mu^2 |\nabla \partial_t p_{\varepsilon,\mu,\nu}^\circ(t)|_{[H]^N}^2 + \frac{1}{2}\frac{d}{dt}|\nabla p_{\varepsilon,\mu,\nu}^\circ(t)|_{[H]^N}^2
    \\
    &\quad -\frac{2}{\sigma} |g'|_{L^\infty}^2 |p_{\varepsilon,\mu,\nu}^\circ(t)|_H^2 - \frac{2 (C_V^{L^4})^4}{\sigma} |\alpha''|_{L^\infty}^2 |p_{\varepsilon,\mu,\nu}^\circ|_V^2 (\varepsilon^2 \L^N(\Omega) + |\nabla \theta_{\varepsilon,\mu,\nu}^\circ(t)|_{[V]^N}^2) 
    \\
    &\quad - \frac{\sigma}{4}|\partial_t p_{\varepsilon,\mu,\nu}^\circ(t)|_H^2 - \frac{\kappa}{8}|\nabla z_{\varepsilon,\mu,\nu}^\circ(t)|_{[H]^N}^2 - \frac{2}{\kappa} |\alpha'|_{L^\infty}^2 |\partial_t p_{\varepsilon,\mu,\nu}^\circ(t)|_H^2 \label{ST_11}
    \\
    &\quad \leq \frac{\sigma}{8}|\partial_t p_{\varepsilon,\mu,\nu}^\circ(t)|_H^2 + \frac{2}{\sigma} |\partial_t u^\circ(t)|_H^2, \ \mbox{ for a.e. } t \in (0,\sigma T).
  \end{align}

  Now, we set the constant $C_5$ as:
  \begin{equation}
    C_5 := \frac{6\sigma^2((C_V^{L^4})^4 + 1)(\L^N(\Omega) + 1)}{\kappa \wedge 1}( |g'|_{L^\infty}^2 + |\alpha'|_{L^\infty} + |\alpha''|_{L^\infty}^2 + 1 ), \label{ST_14}
  \end{equation}
  and define a positive-valued function $R_{\theta,\varepsilon}^* \in L^1(0,\sigma T)$ as:
  \begin{gather}
    R_{\theta,\varepsilon}(t) := |\theta_{\varepsilon,\mu,\nu}^\circ(t)|_{H^2(\Omega)}^2 + |\partial_t \theta_{\varepsilon,\mu,\nu}^\circ(t)|_H^2 + \varepsilon^2 + 1, \mbox{ for a.e. } t \in (0,\sigma T). \label{ST_13}
  \end{gather}
  Combining \eqref{ST_07}, \eqref{ST_10} and \eqref{ST_11}, we arrive at the following Gronwall type inequality:
  \begin{gather}
    \frac{d}{dt} J_{\varepsilon,\mu,\nu}(t) + |\nabla p_{\varepsilon,\mu,\nu}^\circ(t)|_{[H]^N}^2 + \kappa |\nabla z_{\varepsilon,\mu,\nu}^\circ(t)|_{[H]^N}^2 + \frac{\sigma}{2}|\partial_t p_{\varepsilon,\mu,\nu}^\circ(t)|_H^2 + \sigma \mu^2 |\nabla \partial_t p_{\varepsilon,\mu,\nu}^\circ(t)|_{[H]^N}^2
    \\
    \leq C_* R_{\theta,\varepsilon}(t) J_{\varepsilon,\mu,\nu}(t) + \frac{12}{\delta_\alpha \wedge 1} \bigl( |\partial_t u^\circ(t)|_H^2 + |\partial_t u^\circ(t)|_H^2 \bigr), \mbox{ for a.e. } t \in (0,\sigma T). \label{ST_12}
  \end{gather}
  Therefore, we can apply Gronwall's lemma to \eqref{ST_12} and obtain the desired result \eqref{ST_04}.
\end{proof}

\subsection{Limiting observations for a subsequence of $\{ [\eta_{\varepsilon,\mu,\nu}, \theta_{\varepsilon,\mu,\nu}] \}$}\label{sec:limit}

Hereafter, let $\sigma = \sigma_* =8|\alpha'|_{L^\infty}^2/\kappa$ as in Lemma \ref{key_est}. First, due to Lemma \ref{init}, it follows that:
  \begin{itemize}
    \item[$\sharp$ 0)] $\{ \theta_{0,\varepsilon}\,|\, \varepsilon \in (0,1) \}$ is bounded in $W_0$, $\{ \partial \Phi_\kappa^\varepsilon(\alpha(\eta_0);\theta_{0,\varepsilon}) \,|\, \varepsilon \in (0,1) \}$ is bounded in $H$, and $\{\F_\kappa^\varepsilon(\eta_0, \theta_{0,\varepsilon})\,|\,\varepsilon \in (0,1) \} \subset [0,\infty)$ is bounded.
  \end{itemize}
  Moreover, owing to \eqref{init_001} and Remark \ref{rem:resolvent}, we can estimate that:
  \begin{align}
    &\quad \sigma |p_{\varepsilon,\mu,\nu}^\circ(0)|_V^2 \leq |\Lap \eta_0 - g(\eta_0) - \alpha'(\eta_0) \gamma_\varepsilon(\nabla \theta_{0,\varepsilon}) + u(0)|_V^2
    \\
    &\leq 4 \bigl( |\Lap \eta_0|_V^2 + |g(\eta_0)|_V^2 + |\alpha'(\eta_0) \gamma_\varepsilon(\nabla \theta_{0,\varepsilon})|_V^2 + |u(0)|_V^2 \bigr)
    \\
    &\leq 16 \bigl( |\alpha'|_{L^\infty}^2 + |\alpha''|_{L^\infty}^2 + 1 \bigr)((C_V^{L^4})^4 + 1) \bigl(|\eta_0|_{H^3(\Omega)}^2 + |g(\eta_0)|_V^2 + |u(0)|_V^2 + 1 \bigr) \cdot
    \\
    &\qquad \cdot \bigl( |\gamma_\varepsilon(\nabla \theta_{0,\varepsilon})|_V^2 + 1 \bigr) \label{ST_15}
    \\
    &\leq 32C_\kappa^* \bigl( |\alpha'|_{L^\infty}^2 + |\alpha''|_{L^\infty}^2 + 1 \bigr)((C_V^{L^4})^4 + 1) \bigl(|\eta_0|_{H^3(\Omega)}^2 + |g(\eta_0)|_V^2 + |u(0)|_V^2 + 1 \bigr) \cdot
    \\
    &\qquad \cdot \left( |w^* + \theta_0|_H^2 + |\alpha(\eta_0)|_V^2 + \varepsilon^2 \L^N(\Omega)  + 1 \right), \ \mbox{ for any } \varepsilon \in (0,1],
  \end{align}
  and
  \begin{equation}
    \sigma \Bigl( \frac{\delta_\alpha}{2} |z_{\varepsilon,\mu,\nu}^\circ(0)|_H^2 + \nu^2 |\nabla z_{\varepsilon,\mu,\nu}^\circ(0)|_{[H]^N}^2 \Bigr) \leq \frac{1}{2\delta_\alpha} |-w_\varepsilon^* + v(0)|_H^2, \ \mbox{ for any } \varepsilon \in (0,1].
  \end{equation}
  Therefore, we have the following boundedness:
  \begin{itemize}
    \item [$\sharp$ 1)] $\{ p_{\varepsilon,\mu,\nu}^\circ(0) \,|\, \varepsilon \in (0,1),\, \mu \in (0,1),\, \nu \in (0,1) \}$ is bounded in $V$, $\{ z_{\varepsilon,\mu,\nu}^\circ(0) \,|\, \varepsilon \in (0,1),\, \mu \in (0,1),\, \nu \in (0,1) \}$ is bounded in $H$, and 
    \item [$\sharp$ 1')] $\{ \nu z_{\varepsilon,\mu,\nu}^\circ(0) \,|\, \varepsilon \in (0,1),\, \mu \in (0,1),\, \nu \in (0,1) \}$ is bounded in $V$.
  \end{itemize}
  Next, $\sharp$ 0), energy-inequality as in (S3)$_\varepsilon^{\mu,\nu}$ of Lemma \ref{pseudo-parabolic}, and the estimates given in Lemma \ref{lem:H2} lead to:
  \begin{itemize}
    \item[$\sharp$ 2)] $\{ \eta_{\varepsilon,\mu,\nu} \,|\, \varepsilon \in (0,21),\, \mu \in (0,1),\, \nu \in (0,1) \}$, $\{ \theta_{\varepsilon,\mu,\nu} \,|\, \varepsilon \in (0,1),\, \mu \in (0,1),\, \nu \in (0,1) \}$ are bounded in $W^{1,2}(0,T;H)$ and in $L^2(0,T;W_0)$, and 
    \item[$\sharp$ 2')] $\{ \mu \eta_{\varepsilon,\mu,\nu} \,|\, \varepsilon \in (0,1),\, \mu \in (0,1),\, \nu \in (0,1) \}$, $\{ \nu \theta_{\varepsilon,\mu,\nu} \,|\, \varepsilon \in (0,1),\, \mu \in (0,1),\, \nu \in (0,1) \}$ are bounded in $W^{1,2}(0,T;V)$ and in $L^\infty(0,T;W_0)$.
  \end{itemize}
  Finally, Lemma \ref{key_est} enables us to find that
  \begin{itemize}
    \item[$\sharp$ 3)] $\{ \partial_t \eta_{\varepsilon,\mu,\nu} \,|\, \varepsilon \in (0,1),\, \mu \in (0,1),\, \nu \in (0,1) \}$ is bounded in $W^{1,2}(0,T;H)$ and in $L^\infty(0,T;V)$, and $\{ \partial_t \theta_{\varepsilon,\mu,\nu} \,|\, \varepsilon \in (0,1),\, \mu \in (0,1),\, \nu \in (0,1) \}$ is bounded in $L^\infty(0,T;H)$ and in $\sV_T$.
  \end{itemize}
  Here, applying Aubin's compactness theory (cf. \cite[Corollary 4]{MR0916688}), we can obtain a subsequence of $\{ \varepsilon_n \}_{n=1}^\infty \subset (0,1)$; $\varepsilon_n \to \varepsilon_0$, $\{ \mu_n \}_{n=1}^\infty \subset (0,1)$; $\mu_n \downarrow 0$ and $\{ \nu_n \}_{n=1}^\infty \subset (0,1)$; $\nu_n \downarrow 0$ with a limit $[\eta, \theta] \in [\sH_T]^2$ such that the following convergences hold as $n \to \infty$:
  \begin{equation}
    \left\{ \begin{gathered}
    \eta_n := \eta_{\varepsilon_n,\mu_n,\nu_n}\to \eta  \mbox{ in } C([0,T];H), \, \sV_T, \, \mbox{ and weakly in } W^{2,2}(0,T;H), \, L^2(0,T;W_0),
    \\
    \partial_t \eta_n \to \partial_t \eta \mbox{ in } C([0,T];H), \mbox{ weakly in } W^{1,2}(0,T;H), \mbox{ and weakly-$*$ in } L^\infty(0,T;V),
  \end{gathered} \right. \label{eta_conv}
  \end{equation}
  \begin{equation}
    \left\{ \begin{gathered}
    \theta_n := \theta_{\varepsilon_n,\mu_n,\nu_n} \to \theta \mbox{ in } C([0,T];H), \, \sV_T, \mbox{ and weakly in } W^{1,2}(0,T;V), \, L^2(0,T;W_0),
    \\
    \partial_t \theta_n \to \partial_t \theta \mbox{ weakly in } \sV_T, \mbox{ and weakly-$*$ in } L^\infty(0,T;H).
  \end{gathered} \right. \label{theta_conv}
  \end{equation}
  and
  \begin{equation}
    \eta_n(t) \to \eta(t), \ \theta_n(t) \to \theta(t) \mbox{ in } V, \mbox{ for a.e. } t \in [0,T], \label{pointwise_V_conv}
  \end{equation}
  with
  \begin{equation}
    [\eta(0), \theta(0)] = \lim_{n \to \infty} [\eta_n(0), \theta_n(0)] = \lim_{n \to \infty} [\eta_0, \theta_{0,\varepsilon_n}] = [\eta_0, \theta_0] \mbox{ in } [H]^2.
  \end{equation}
  We note that the statement $\eta, \theta \in L^\infty(0,T;W_0)$ has not been confirmed.

\subsection{Verification of (S0)--(S4)} \label{sec:sol}
  
  Now, $[\eta_n, \theta_n]$ solves the following variational system for any open interval $I \subset (0,T)$:
  \begin{gather}
      \int_I \bigl( \partial_t \eta_n(t) + g(\eta_n(t)) + \alpha'(\eta_n(t))\gamma_{\varepsilon_n}(\nabla \theta_n(t)), \varphi \bigr)_H \,dt \label{eta_VI1}
      \\
      + \int_I (\nabla (\eta_n + \mu_n^2 \partial_t \eta_n)(t), \nabla \varphi )_{{[H]^N}} \,dt = \int_I (u(t), \varphi)_H \,dt, \mbox{ for any } \varphi \in V,
    \end{gather}
    and
    \begin{gather}
        \int_I \bigl(\alpha_0(\eta_n(t)) \partial_t \theta_n(t), \theta_n(t) - \psi\bigr)_H \,dt + \int_I \Phi_\kappa^{\varepsilon_n}(\alpha(\eta_n(t));\theta_n(t)) \,dt \label{theta_VI1}
        \\
        + \int_I \bigl( \nu_n^2 \nabla \partial_t \theta_n(t), \nabla (\theta_n(t) - \psi) \bigr)_{[H]^N} \,dt \leq \int_I \Phi_\kappa^{\varepsilon_n}(\alpha(\eta_n(t));\psi) + \int_I (v(t), \psi)_H \,dt,
        \\
        \mbox{ for any } \psi \in V.
    \end{gather}
    According to \eqref{eta_conv} and \eqref{theta_conv}, letting $n \to \infty$ in \eqref{eta_VI1} and \eqref{theta_VI1} yields that:
    \begin{gather}
      \int_I \bigl( \partial_t \eta(t) + g(\eta(t)) + \alpha'(\eta(t))\gamma_{\varepsilon_0}(\nabla \theta(t)), \varphi \bigr)_H \,dt + \int_I (\nabla \eta(t), \nabla \varphi )_{{[H]^N}} \,dt 
      \\
      = \int_I (u(t), \varphi)_H \,dt, \mbox{ for any } \varphi \in V, \label{eta_VI2}
    \end{gather}
    and
    \begin{gather}
        \int_I \bigl(\alpha_0(\eta(t)) \partial_t \theta(t), \theta(t) - \psi\bigr)_H\,dt + \int_I \Phi_\kappa^{\varepsilon_0}(\alpha(\eta(t));\theta(t)) \,dt \label{theta_VI2}
        \\
        \leq \int_I \Phi_\kappa^{\varepsilon_0}(\alpha(\eta(t));\psi) + \int_I (v(t), \theta(t) - \psi)_H \,dt,
        \\
        \mbox{ for any } \psi \in V.
    \end{gather}
    Since $I$ is arbitrary, $[\eta,\theta]$ solves the variational system for a.e. $t \in (0,T)$. Also, from the convergences \eqref{eta_conv}--\eqref{pointwise_V_conv}, letting $n \to \infty$ in the energy inequality (S4)$_\varepsilon^{\mu,\nu}$ yields that the energy inequality (S4) holds for a.e. $t \in (0,T)$. Furthermore, the mappings $t \in [0,T] \mapsto \eta(t) \in V$ and $t \in [0,T] \mapsto \theta(t) \in V$ are continuous, the energy inequality (S4) holds for every $t \in [0,T]$.

    Finally, we confirm the strong regularity of the limit $[\eta, \theta]$. We set:
    \begin{equation}
      \tilde u := -\partial_t \eta - g(\eta) - \alpha'(\eta) \gamma_{\varepsilon_0}(\nabla \theta) + \eta + u \in L^\infty(0,T;H),
    \end{equation}
    and
    \begin{equation}
      \tilde v := -\alpha_0(\eta) \partial_t \theta + \theta + v \in L^\infty(0,T;H).
    \end{equation}
    Noting \eqref{ev_eta}, \eqref{ev_theta} and the regularity theory of elliptic problem as in \eqref{SD_est}, we can confirm the assertion $\eta, \theta \in L^\infty(0,T;W_0)$ owing to the following identity:
    \begin{equation}
      - \Lap_N \eta(t) + \eta(t) = \tilde u(t) \mbox{ in } H, \ \mbox{ for a.e. } t \in (0,T), \label{ev_eta2}
    \end{equation}
    and
    \begin{equation}
      \partial \Phi_\kappa^{\varepsilon_0}(\alpha(\eta(t));\theta(t)) + \theta(t) = \tilde v(t) \mbox{ in } H, \ \mbox{ for a.e. } t \in (0,T). \label{ev_theta2}
    \end{equation}
    with the following estimates:
    \begin{align}
      |\eta(t)|_{H^2(\Omega)}^2 &\leq C_0 |-\Lap_N \eta(t) + \eta(t)|_H^2
      \\
      &= C_0|-\partial_t \eta - g(\eta) - \alpha'(\eta) \gamma_{\varepsilon_0}(\nabla \theta) + \eta + u|_{L^\infty(0,T;H)}^2,
      \\
      &\qquad\qquad\qquad \mbox{ for a.e. } t \in (0,T),
    \end{align}
    and
    \begin{align}
      |\theta(t)|_{H^2(\Omega)}^2 &\leq | (\partial \Phi_\kappa^{\varepsilon_0}(\alpha(\eta(t)); \cdot) + I)^{-1} \tilde v|_H^2
      \\
      &\leq C_\kappa^*(|-\alpha_0(\eta) \partial_t \theta + \theta + v|_{L^\infty(0,T;H)}^2 + |\alpha(\eta)|_{L^\infty(0,T;V)}^2)  \label{theta_H2},
      \\
      &\qquad\qquad\qquad \mbox{ for a.e. } t \in (0,T).
    \end{align}
    
    Thus, we finish the verification of (S0)--(S4), and conclude that $[\eta, \theta]$ is the solution to (S)$_{\varepsilon_0}$.

\subsection{Verification of the uniqueness of solution.} \label{sec:uni}
    Let $[\eta_1, \theta_1]$ and $[\eta_2, \theta_2]$ be two solutions of (S)$_{\varepsilon_0}$ corresponding to the same initial data $[\eta_0,\theta_0] \in W_*^{\varepsilon_0}$ and the same forcing pair $[u,v]$. By letting $\varphi = (\eta_1 - \eta_2)(t) \in V$ in the variational identity (S1), and subtracting the both sides of variational identity (S1), we can compute that:
    \begin{gather}
      \frac{1}{2}\frac{d}{dt}(|(\eta_1 - \eta_2)(t)|_H^2) + |\nabla (\eta_1 - \eta_2)(t)|_{[H]^N}^2 - |g'|_{L^\infty} |(\eta_1 - \eta_2)(t)|_H^2 \label{eta_uni1}
      \\
      + (\alpha'(\eta_1(t)) \gamma_{\varepsilon_0}(\nabla \theta_1(t)) - \alpha'(\eta_2(t)) \gamma_{\varepsilon_0}(\nabla \theta_2(t)), (\eta_1 - \eta_2)(t))_H = 0, \mbox{ for a.e. } t \in (0,T).
    \end{gather}
    Noting the convexity of $\alpha$, we see that:
    \begin{align}
      &(\alpha'(\eta_1(t)) \gamma_{\varepsilon_0}(\nabla \theta_1(t)) - \alpha'(\eta_2(t)) \gamma_{\varepsilon_0}(\nabla \theta_2(t)), (\eta_1 - \eta_2)(t))_H
      \\
      &\quad \geq \int_\Omega (\alpha(\eta_1(t)) - \alpha(\eta_2(t))) \gamma_{\varepsilon_0}(\nabla \theta_1(t)) \,dx + \int_\Omega (\alpha(\eta_2(t)) - \alpha(\eta_1(t))) \gamma_{\varepsilon_0}(\nabla \theta_2(t))\,dx
      \\
      &\quad = \int_\Omega (\alpha(\eta_1(t)) - \alpha(\eta_2(t))) (\gamma_{\varepsilon_0}(\nabla \theta_1(t)) - \gamma_{\varepsilon_0}(\nabla \theta_2(t))) \,dx
      \\
      &\quad \geq - \frac{2}{\kappa} |\alpha'|_{L^\infty}^2 |(\eta_1 - \eta_2)(t)|_H^2 - \frac{\kappa}{8} |\nabla (\theta_1 - \theta_2)(t)|_{[H]^N}^2, \mbox{ for a.e. } t \in (0,T). \label{eta_uni2}
    \end{align}
    By using \eqref{eta_uni2}, \eqref{eta_uni1} can be reduced to:
    \begin{align}
      &\frac{1}{2}\frac{d}{dt}(|(\eta_1 - \eta_2)(t)|_H^2) + |\nabla (\eta_1 - \eta_2)(t)|_{[H]^N}^2 \label{eta_uni3}
      \\
      &\quad \leq \Bigl(|g'|_{L^\infty} + \frac{2}{\kappa}|\alpha'|_{L^\infty}^2 \Bigr) |(\eta_1 - \eta_2)(t)|_H^2 + \frac{\kappa}{8}|\nabla(\theta_1 - \theta_2)(t)|_{[H]^N}^2, \mbox{ for a.e. } t \in (0,T).
    \end{align}
    On the other hand, by letting $\psi = \theta_2(t)$ (resp. $\psi = \theta_1(t)$) in the variational inequality for $\theta_1$ (resp. $\theta_2$), and subtracting the both sides of variational inequality (S2), we have:
    \begin{align}
      &\quad (\alpha_0(\eta_1(t)) \partial_t \theta_1(t) - \alpha_0(\eta_2(t)) \partial_t \theta_2(t), \theta_1(t) - \theta_2(t))_H + \kappa|\nabla (\theta_1 - \theta_2)(t)|_{[H]^N}^2
      \\
      &\leq \int_\Omega \Bigl( \alpha(\eta_1(t))(\gamma_{\varepsilon_0}(\nabla\theta_1(t)) - \gamma_{\varepsilon_0}(\nabla \theta_2(t))) -\alpha(\eta_2(t)) (\gamma_{\varepsilon_0}(\nabla \theta_1(t)) - \gamma_{\varepsilon_0}(\nabla \theta_2(t))) \Bigr) \,dx
      \\
      &\leq \int_\Omega ( \alpha(\eta_1(t)) - \alpha(\eta_2(t))) (\gamma_{\varepsilon_0}(\nabla \theta_1(t)) - \gamma_{\varepsilon_0}(\nabla \theta_2(t))) \,dx
      \\
      &\leq \frac{2}{\kappa} |\alpha'|_{L^\infty}^2 |(\eta_1 - \eta_2)(t)|_H^2 + \frac{\kappa}{8} |\nabla (\theta_1 - \theta_2)(t)|_{[H]^N}^2, \mbox{ for a.e. } t \in (0,T). \label{theta_uni1}
    \end{align}
   Taking account of the continuous embedding from $V$ to $L^4(\Omega)$, which is valid when $N \leq 4$, we can compute the first term of \eqref{theta_uni1} as follows:
    \begin{align}
      &\quad (\alpha_0(\eta_1(t)) \partial_t \theta_1(t) - \alpha_0(\eta_2(t)) \partial_t \theta_2(t), \theta_1(t) - \theta_2(t))_H
      \\
      &= \frac{1}{2}\frac{d}{dt} \bigl( |\sqrt{\alpha_0(\eta_1(t))} (\theta_1 - \theta_2)(t)|_H^2\bigr) - \frac{1}{2} \int_\Omega \alpha_0'(\eta_1(t)) \partial_t \eta_1(t) |(\theta_1 - \theta_2)(t)|^2 \,dx
      \\
      &\qquad + \int_\Omega (\alpha_0(\eta_1(t)) - \alpha_0(\eta_2(t))) \partial_t \theta_2(t) (\theta_1 - \theta_2)(t) \,dx \label{theta_uni2}
      \\
      &= \frac{1}{2}\frac{d}{dt} \bigl( |\sqrt{\alpha_0(\eta_1(t))} (\theta_1 - \theta_2)(t)|_H^2\bigr) 
      \\
      &\qquad - \frac{1}{2} |\alpha_0'|_{L^\infty} |\partial_t \eta_1(t)|_{L^4(\Omega)} |(\theta_1 - \theta_2)(t)|_H |(\theta_1 - \theta_2)(t)|_{L^4(\Omega)}
      \\
      &\qquad - |\alpha_0'|_{L^\infty}|(\eta_1 - \eta_2)(t)|_H |\partial_t \theta_2(t)|_{L^4(\Omega)}|(\theta_1 - \theta_2)(t)|_{L^4(\Omega)}
      \\
      & \leq \frac{1}{2}\frac{d}{dt} \bigl( |\sqrt{\alpha_0(\eta_1(t))} (\theta_1 - \theta_2)(t)|_H^2\bigr) - \frac{\kappa}{4}|(\theta_1 - \theta_2)(t)|_V^2
      \\
      &\qquad - \frac{2 (C_V^{L^4})^4 |\alpha_0'|_{L^\infty}^2}{\kappa}(|\partial_t \eta_1(t)|_V^2 + |\partial_t \theta_2(t)|_V^2)(|(\eta_1 - \eta_2)(t)|_H^2 + |(\theta_1 - \theta_2)(t)|_H^2).
    \end{align}
    Hence, combining \eqref{theta_uni1} and \eqref{theta_uni2}, we have
    \begin{align}
      &\frac{1}{2}\frac{d}{dt} \bigl( |\sqrt{\alpha_0(\eta_1(t))} (\theta_1 - \theta_2)(t)|_H^2\bigr) + \frac{5\kappa}{8}|\nabla (\theta_1 - \theta_2)(t)|_{[H]^N}^2
      \\
      &\quad \leq \biggl( \frac{2}{\kappa}((C_V^{L^4})^4 |\alpha_0'|_{L^\infty}^2 + |\alpha'|_{L^\infty}) + \frac{\kappa}{4} \biggr)(|\partial_t \eta_1(t)|_V^2 + |\partial_t \theta_2(t)|_V^2 + 1)\cdot \label{theta_uni3}
      \\
      &\qquad\quad \cdot (|(\eta_1 - \eta_2)(t)|_H^2 + |(\theta_1 - \theta_2)(t)|_H^2), \mbox{ for a.e. } t \in (0,T).
    \end{align}

    Now, we set:
    \begin{gather}
      J^*(t) := |(\eta_1 - \eta_2)(t)|_H^2 + | \sqrt{\alpha_0(\eta_1(t))}(\theta_1 - \theta_2)(t)|_H^2, \ \mbox{ for } t \in [0,T],
      \\
      R^*(t) := |\partial_t \eta_1(t)|_V^2 + |\partial_t \theta_2(t)|_V^2 + 1, \ \mbox{ for a.e. } t \in (0,T),
    \end{gather}
    and
    \begin{equation}
      \widetilde{C}_6 := \frac{4}{(\kappa \wedge 1)(\delta_\alpha \wedge 1)} (|g'|_{L^\infty} + |\alpha'|_{L^\infty}^2 + (C_V^{L^4})^4|\alpha_0'|_{L^\infty}^2 + \kappa).
    \end{equation}
    Then, by virtue of \eqref{eta_uni3} and \eqref{theta_uni3}, we arrive at the following Gronwall-type inequality:
    \begin{gather}
      \frac{1}{2} \frac{d}{dt} J^*(t) + |\nabla(\eta_1 - \eta_2)(t)|_{[H]^N}^2 + \frac{\kappa}{2}|\nabla (\theta_1 - \theta_2)(t)|_{[H]^N}^2 \leq \widetilde{C}_6 R_*(t) J^*(t),
      \\
      \mbox{ for a.e. } t \in (0,T),
    \end{gather}
    which implies that the solution is unique, and thus, we finish the proof of Main Theorem \ref{mainThm1}.

\subsection{The proof of Main Theorem \ref{mainThm2}} \label{sec:mainThm2}

Thanks to limiting observations in the estimates as in Lemma \ref{lem:H2}, \ref{init} and \ref{key_est}, we have the following corollary, which supports the proof of Main Theorem \ref{mainThm2}.

\begin{cor}\label{cor:est}
  The unique solution $[\eta, \theta]$ to (S)$_{\varepsilon_0}$ fulfills the following estimates:
  \begin{gather}
      \left\{ \begin{aligned}
    &\frac{1}{2} |\Lap \eta|_{\sH_T}^2 \leq |\nabla \eta_0|_{[H]^N}^2 + C_2(\varepsilon_0^2 + 1)\bigl( |\eta|_{\sH_T}^2 + |\nabla \theta|_{[\sH_T]^N}^2 + |u|_{\sH_T}^2 + T \bigr),
    \\
    &\frac{\kappa}{2} |\Lap \theta|_{\sH_T}^2 \leq \frac{4C_3}{\delta_\alpha^2 \wedge 1}(|\eta|_{\sV_T}^2 + |\theta|_{\sH_T}^2 + \sF_\kappa^{\varepsilon_0}(\eta_0, \theta_0) + |u|_{\sH_T}^2 + |v|_{\sH_T}^2 + T),
  \end{aligned} \right.
  \end{gather}
  and
  \begin{align}
    &\quad |\partial_t \eta|_{L^\infty(0,T;V)}^2 + |\partial_t \theta|_{L^\infty(0,T;H)}^2 + |\nabla \partial_t \eta|_{[\sH_T]^N}^2 + \kappa |\nabla \partial_t \theta|_{[\sH_T]^N}^2 + |\partial_t^2 \eta|_{\sH_T}^2
    \\
    &\leq \sigma C_7 \exp\biggl( \frac{24(C_0 + 1)C_5 C_3 \sigma}{(\kappa \wedge 1)(\delta_\alpha \wedge 1)} \cdot
    \\
    &\qquad\qquad\qquad \quad \cdot (|\eta|_{\sV_T}^2 + |\theta|_{\sH_T}^2 + \sF_\kappa^{\varepsilon_0}(\eta_0, \theta_0) + |u|_{\sH_T}^2 + |v|_{\sH_T}^2 + (\varepsilon_0^2 + 2)T) \biggr) \cdot
    \\
    &\qquad \cdot \bigl( |\eta_0|_{H^3(\Omega)}^2 + |g(\eta_0)|_V^2 + |u(0)|_V^2 + |\partial_t u|_{\sH_T}^2 + 1 \bigr) \cdot 
    \\
    &\qquad \cdot \bigl( |\theta_0^*|_H^2 + |\theta_0|_V^2 + |\alpha(\eta_0)|_V^2 + |v(0)|_H^2 + |\partial_t v|_{\sH_T}^2 + \varepsilon_0^2 \L^N(\Omega) + 1 \bigr),
  \end{align}
  where $\theta_0^* \in \partial \Phi_\kappa^{\varepsilon_0}(\alpha(\eta_0);\theta_0)$ is arbitrary, and $C_7$ is a constant, given as:
  \begin{equation}
    C_7 := \frac{1536 C_\kappa^*}{\delta_\alpha^2 \wedge 1}(|\alpha''|_{L^\infty}^2 + |\alpha'|_{L^\infty}^2 + 1)((C_V^{L^4})^4 + 1).
  \end{equation}  
\end{cor}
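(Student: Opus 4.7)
The plan is to deduce all three estimates by applying Lemmas \ref{lem:H2} and \ref{key_est} to the pseudo-parabolic approximations $[\eta_n, \theta_n] := [\eta_{\varepsilon_n,\mu_n,\nu_n}, \theta_{\varepsilon_n,\mu_n,\nu_n}]$ constructed in Sections 5.1--5.2, and then passing to the limit $n \to \infty$ using the convergences \eqref{eta_conv}--\eqref{pointwise_V_conv} together with the weak lower semi-continuity of norms. All three bounds are uniform in $(\varepsilon,\mu,\nu)$ once we substitute the appropriate auxiliary bounds, so the limit passages are straightforward.

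For the first estimate, I set $t=T$ in \eqref{H2_01} with $(\varepsilon,\mu)=(\varepsilon_n,\mu_n)$, drop the nonnegative term $\mu_n^2|\Lap \eta_n(T)|_H^2$, and observe that $\mu_n^2|\Lap \eta_0|_H^2 \to 0$ since $\eta_0 \in W_0 \cap H^3(\Omega)$. Since $\Lap \eta_n \to \Lap \eta$ weakly in $\sH_T$, weak lower semi-continuity gives $\tfrac12|\Lap \eta|_{\sH_T}^2 \leq \liminf_n \tfrac12|\Lap \eta_n|_{\sH_T}^2$, and the right-hand side of \eqref{H2_01} converges by strong convergence in $\sH_T$ and $\sV_T$ of $\eta_n$ and $\theta_n$, together with $\varepsilon_n \to \varepsilon_0$.

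For the second estimate, the term $\nu_n^2|\Lap \theta_{0,\varepsilon_n}|_H^2$ vanishes in the limit by Lemma \ref{init} ($\sharp 0$). The crucial additional ingredient is a uniform bound on $|\partial_t \theta_n|_{\sH_T}^2$, obtained from the energy inequality (S4)$_\varepsilon^{\mu,\nu}$ of Lemma \ref{pseudo-parabolic}:
\begin{equation*}
\tfrac{\delta_\alpha}{2}|\partial_t \theta_n|_{\sH_T}^2 \leq \F_\kappa^{\varepsilon_n}(\eta_0, \theta_{0,\varepsilon_n}) + \tfrac12 |u|_{\sH_T}^2 + \tfrac{1}{2\delta_\alpha}|v|_{\sH_T}^2.
\end{equation*}
By Mosco-convergence (Example \ref{Rem.ExMG}(I), Remark \ref{Rem.MG}), $\F_\kappa^{\varepsilon_n}(\eta_0, \theta_{0,\varepsilon_n}) \to \F_\kappa^{\varepsilon_0}(\eta_0, \theta_0)$. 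Substituting this into \eqref{H2_02} and taking $\liminf$ in $n$ produces the asserted bound, with the factor $\tfrac{4C_3}{\delta_\alpha^2 \wedge 1}$ arising from consolidating the $\delta_\alpha$-dependent constants.

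For the third (exponential) estimate, I apply Lemma \ref{key_est} with $\sigma = \sigma_*$ to the time-rescaled approximation $[\eta_n^\circ, \theta_n^\circ]$, then rescale back to $[0,T]$. Uniform bounds on the initial quantity $J_{\varepsilon_n,\mu_n,\nu_n}(0)$ follow from \eqref{ST_15} and observations $\sharp 1, \sharp 1'$ of Section 5.2, giving the final factor involving $|\eta_0|_{H^3}^2$, $|u(0)|_V^2$, $|\alpha(\eta_0)|_V^2$ and $|w^*+\theta_0|_H^2$ (with $w^* = \theta_0^*$). The argument of the exponential is controlled uniformly by substituting the second estimate (applied to $\theta_n$) to bound $|\theta_n^\circ|_{L^2(0,\sigma T; H^2(\Omega))}^2$, and by the energy inequality to bound $|\partial_t \theta_n^\circ|_{\sH_{\sigma T}}^2$; the constant $\tfrac{24(C_0+1)C_5 C_3 \sigma}{(\kappa\wedge 1)(\delta_\alpha\wedge 1)}$ emerges from combining these with the exponent $C_5$. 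After weak lower semi-continuity is applied to the terms on the left-hand side of \eqref{ST_04}, we obtain the stated inequality. The main obstacle is purely bookkeeping---tracking the explicit constants through the rescaling and the exponential---rather than any analytic difficulty, since the limit passage inside the exponential is justified by the boundedness (not merely convergence) of its argument along the subsequence.
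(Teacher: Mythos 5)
Your proposal is correct and follows essentially the same route the paper intends: the paper offers no explicit proof, stating only that the corollary follows by ``limiting observations in the estimates as in Lemma \ref{lem:H2}, \ref{init} and \ref{key_est},'' which is precisely your plan of applying \eqref{H2_01}, \eqref{H2_02} and \eqref{ST_04} to the approximations $[\eta_{\varepsilon_n,\mu_n,\nu_n},\theta_{\varepsilon_n,\mu_n,\nu_n}]$, controlling $|\partial_t\theta_n|_{\sH_T}$ via the energy inequality and $\sF_\kappa^{\varepsilon_n}(\eta_0,\theta_{0,\varepsilon_n})\to\sF_\kappa^{\varepsilon_0}(\eta_0,\theta_0)$ via Mosco convergence, and passing to the limit by weak lower semi-continuity. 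Your identification of $w^*$ with $\theta_0^*$ and your handling of the vanishing terms $\mu_n^2|\Lap\eta_0|_H^2$ and $\nu_n^2|\Lap\theta_{0,\varepsilon_n}|_H^2$ are consistent with the paper's setup.
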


\begin{proof}[The proof of Main Theorem \ref{mainThm2}]
    The proof is based on the limiting observation as in subsection \ref{sec:limit}. According to the assumptions \eqref{mainThm2_01}, the energy-inequality, one can say that:
    \begin{itemize}
      \item[$\flat$ 0)] $\{ \F_\kappa^{\varepsilon_n}(\eta_{0,n}, \theta_{0,n}) \,|\, n = 1,2,3,\dots \} \subset [0,\infty)$ is bounded.
      \item[$\flat$ 1)] $\{ [\eta_n, \theta_n] \,|\, n = 1,2,3,\dots \}$ is bounded in $[W^{1,2}(0,T;H)]^2$ and in $[L^\infty(0,T;V)]^2$.
    \end{itemize}
    
    Next, according to Corollary \ref{cor:est}, the solutions $[\eta_n, \theta_n]$ fulfills the following estimates:
    \begin{gather}
      \left\{ \begin{aligned}
    &\frac{1}{2} |\Lap \eta_n|_{\sH_T}^2 \leq |\nabla \eta_{0,n}|_{[H]^N}^2 + C_2(\varepsilon_n^2 + 1)\bigl( |\eta_n|_{\sH_T}^2 + |\nabla \theta_n|_{[\sH_T]^N}^2 + |u_n|_{\sH_T}^2 + T \bigr),
    \\
    &\frac{\kappa}{2} |\Lap \theta_n|_{\sH_T}^2 \leq \frac{4C_3}{\delta_\alpha^2 \wedge 1}(|\eta_n|_{\sV_T}^2 + |\theta_n|_{\sH_T}^2 + |\partial_t \theta_n|_{\sH_T}^2 + |v_n|_{\sH_T}^2 + T),
  \end{aligned} \right.
  \end{gather}
  and
  \begin{align}
    &|\partial_t \eta_n|_{L^\infty(0,T;V)}^2 + |\partial_t \theta_n|_{L^\infty(0,T;H)}^2 + |\nabla \partial_t \eta_n|_{[\sH_T]^N}^2 + \kappa |\nabla \partial_t \theta_n|_{[\sH_T]^N}^2 + |\partial_t^2 \eta_n|_{\sH_T}^2
    \\
    &\leq \sigma C_7 \exp\biggl( \frac{24(C_0 + 1)C_5 C_3 \sigma}{(\kappa \wedge 1)(\delta_\alpha \wedge 1)} \cdot
    \\
    &\qquad\qquad\qquad \quad \cdot (|\eta_n|_{\sV_T}^2 + |\theta_n|_{\sH_T}^2 + \sF_\kappa^{\varepsilon_n}(\eta_{0_n}, \theta_{0_n}) + |u_n|_{\sH_T}^2 + |v_n|_{\sH_T}^2 + (\varepsilon_n^2 + 2)T) \biggr) \cdot
    \\
    &\qquad \cdot \bigl( |\eta_{0_n}|_{H^3(\Omega)}^2 + |g(\eta_{0,n})|_V^2 + |u_n(0)|_V^2 + |\partial_t u_n|_{\sH_T}^2 + 1 \bigr) \cdot 
    \\
    &\qquad \cdot \bigl( |\theta_{0,n}^*|_H^2 + |\theta_{0,n}|_V^2 + |\alpha(\eta_{0,n})|_V^2 + |v_n(0)|_H^2 + |\partial_t v_n|_{\sH_T}^2 + \varepsilon_n^2 \L^N(\Omega) + 1 \bigr).
  \end{align}  
  Therefore, by combining the energy-inequality, one can see that the sequence of solutions $\{ [\eta_n, \theta_n] \}_{n=1}^\infty$ has the following boundedness:
  \begin{itemize}
    \item[$\flat$ 2)] $\{ \eta_n \,|\, n = 1,2,3,\dots \}$ is bounded in $W^{2,2}(0,T;H)$, $W^{1,\infty}(0,T;V)$, and in $L^2(0,T;W_0)$, and $\{ \theta_n \,|\, n = 1,2,3,\dots \}$ is bounded in $W^{1,\infty}(0,T;H)$, $W^{1,2}(0,T;V)$, and \linebreak in $L^2(0,T;W_0)$.
  \end{itemize}
  Finally, by the same arguments as in \eqref{ev_eta2}--\eqref{ev_theta2}, we can arrive at the following estimates:
  \begin{itemize}
    \item[$\flat$ 3)] $\{ [\eta_n, \theta_n] \,|\, n = 1,2,3,\dots \}$ is bounded in $[L^\infty(0,T;W_0)]^2$.
  \end{itemize}

  Thus, applying the compactness theory of Aubin's type again, we can find a subsequence $\{ [\eta_{n_k}, \theta_{n_k}] \}_{k=1}^\infty$ with a limiting pair $[\tilde \eta, \tilde \theta]$ such that
  \begin{gather}
    \left\{ \begin{aligned}
      &\eta_{n_k} \to \tilde \eta \mbox{ in } C([0,T];V), \mbox{ weakly in } W^{2,2}(0,T;H), \, W^{1,q}(0,T;V) \mbox{ for any } q \in [1,\infty),
      \\
      &\partial_t \eta_{n_k} \to \partial_t \tilde \eta \mbox{ in } C([0,T];H), \mbox{ weakly in } W^{1,2}(0,T;H) \mbox{ and weakly-$*$ in } L^\infty(0,T;V),
      \\
      &\theta_{n_k} \to \tilde \theta \mbox{ in } C([0,T];V), \mbox{ weakly in } W^{1,2}(0,T;V), \, W^{1,q}(0,T;H), \mbox{ for any } q \in [1,\infty),
      \\
      &\partial_t \theta_{n_k} \to \partial_t \tilde \theta \mbox{ weakly-$*$ in } L^\infty(0,T;H),
    \end{aligned} \right.
    \\
    \mbox{ as } k \to \infty.
  \end{gather}
  Moreover, the limiting process as in subsection \ref{sec:sol} leads to that $[\tilde \eta, \tilde \theta]$ is the solution to the system (S)$_{\varepsilon_0}$ with the initial data $[\eta_0, \theta_0]$ and the forcing pair $[u,v]$. Since the solution is unique, we can conclude that $[\tilde \eta, \tilde \theta] = [\eta, \theta]$, and hence, the convergence \eqref{mainThm2_02} is verified for some subsequence of $\{ [\eta_n, \theta_n] \}_{n=1}^\infty$.

  Finally, we can say that the uniqueness of solution to (S)$_{\varepsilon_0}$ implies that the limit point of $\{ [\eta_n, \theta_n] \}_{n=1}^\infty$ is unique. Thus, we can conclude that the convergences \eqref{mainThm2_02} hold without taking a subsequence, and hence, the proof of Main Theorem \ref{mainThm2} is completed.

  \end{proof}

  \medskip
  \noindent
  \textbf{Acknowledgements}: The authors are grateful to Professor Mitsuharu Otani at Waseda University for variable comments on this paper. The first author of this work is supported by JST SPRING, Grant Number JPMJSP2109.


\end{document}